\renewcommand{\today}{\the\day\ \shortmonthname[\month] \the\year}
\author{William O'Regan}
\title{Covering sponges with tubes}
\email{woregan@math.ubc.ca}
\date{\today}
\subjclass[2010]{05B99, 28A78, 28A80}
\keywords{tube-null, orthogonal projection}
\thanks{This work was completed while the author was supported by the EPSRC via the project \emph{Ergodic and combinatorial methods in fractal geometry}, project ref.~2443767.}
\newcommand{\E}{\mathrm{H}}
\newcommand{\p}{\mathrm{p}}
\newcommand{\calV}{\mathcal{V}}
\newcommand{\Leb}{\mathcal{L}^1|_{[0,1)}}
\newcommand{\M}{\calM}
\newcommand{\Ec}{\calE}
\newcommand{\om}{\omega}
\newcommand{\R}{\mathbb{R}}
\newcommand{\pr}{\mathbb{P}}
\newcommand{\N}{\mathbb{N}}
\newcommand{\Z}{\mathbb{Z}}
\newcommand{\Ga}{\Gamma}
\newcommand{\calL}{\mathcal{L}}
\newcommand{\Le}{\mathcal{L}}
\newcommand{\calE}{\mathcal{E}}
\newcommand{\calF}{\mathcal{F}}
\newcommand{\F}{\mathcal{F}}
\newcommand{\calM}{\mathcal{M}}
\newcommand{\spt}{\operatorname{spt}}
\newcommand{\dimh}{\dim_{\rm{H}}}
\newcommand{\V}{\mathcal{V}}
\numberwithin{equation}{section}
\theoremstyle{plain}
\newtheorem{thm}[equation]{Theorem}
\newtheorem{lemma}[equation]{Lemma}
\newtheorem{cor}[equation]{Corollary}
\newtheorem{prop}[equation]{Proposition}
\newtheorem{claim}[equation]{Claim}
\newtheorem{problem}[equation]{Problem}
\theoremstyle{definition}
\newtheorem{definition}[equation]{Definition}
\theoremstyle{remark}
\newlength\tindent
\renewcommand{\indent}{\hspace*{\tindent}}
\newcommand{\nref}[1]{(\hyperref[#1]{#1})}
\begin{document}
\begin{abstract}
    The aim of this note is to give a short proof of a result of Py\"or\"al\"a--Shmerkin--Suomala--Wu; the Sierpi\'nski carpet, and generalisations, are tube-null; they can be covered with tubes of arbitrarily small total width. We remark that a more general class of sponge-like sets satisfy this property. For a given $\epsilon > 0$ the proof is able to give an explicit description of the tubes for which the total width is less than $\epsilon.$
\end{abstract}
\maketitle
\label{chapter.tubenull}
\section{Introduction}
We call a closed $\delta/2$-neighbourhood of a line in Euclidean space a \textit{tube} of width $\delta.$ We say that a subset of Euclidean space is \textit{tube-null} if it can be covered by tubes of arbitrarily small total width. A question that attracts much attention in harmonic analysis is which functions is one able to recover the function from its Fourier transform. 
\subsection{The localisation problem}
\begin{definition}
    Let $f$ be a function on $\R^d.$ For $R > 0$ define the \textit{spherical mean of radius} $R$ by 
$$S_Rf(x) = \int_{|\xi| < R}\hat f(\xi) e^{2\pi i x \cdot \xi} d\xi.$$
\end{definition}
One of the most interesting and difficult problems in harmonic analysis is determining whether we can recover the values of every $f \in L^2(\R^d)$ from the pointwise limit of its spherical means $S_Rf.$ That is,
\begin{problem}
    Is it true that for all $f \in L^2(\R^d)$ we have
    $$\lim_{R\rightarrow \infty} S_Rf(x) = f(x) \text{ a.e.}$$
\end{problem} 
For $d = 1,$ the result is true; this is an extension to the real line of a result of Carleson \cite{carleson} \cite{kenig}. The problem is open for $d \geq 2.$ Carbery, Soria, and Vargas \cite{car2} showed that if $K \subset B(0,1)$ is so-called `tube-null', then there exists a function $f \in L^2(\R^d)$ which is identically zero on $B(0,1)$ but $S_Rf(x)$ fails to converge for every $x \in K.$ It is an open problem to characterise all such sets of divergence; in particular, it is not known if such a set is tube-null. If the assumption that $\spt f \subset \R^d\setminus B(0,1)$ is dropped, then it is not even known if the divergence set is Lebesgue null.
\subsection{The definition of a tube-null set}
\begin{definition}
We call a \textit{tube} $T$ of width $w = w(T)> 0$ the closed $w/2$-neighbourhood of some line in $\mathbb{R}^d,$ where $d \geq 2$ is an integer.
\end{definition}
\begin{definition} \label{tube-null}
 A set $K \subset \mathbb{R}^d$ is called \textit{tube-null} if for every $\epsilon > 0$ there exists a countable family of tubes $\{T_i\}_{i \in \N}$ such that 
  $$K \subset \bigcup_{i\in \N}T_i.$$ and 
 $$\sum_{i\in \N}w(T_i)^{d-1} < \epsilon.$$  
\end{definition}
An easy example of a tube-null set is the following: Let $C$ be the middle-1/3 Cantor set and consider $C \times \mathbb{R}.$ 
Let $\epsilon > 0.$ Since $\calL^1(C) = 0$ we can find a cover $\{U_i\}_{i \in \N}$ of $C$ by closed intervals such that $\sum|U_i| < \epsilon.$ Then consider the tubes $\{T_i\}_{i \in \N}$ where $T_i = U_i \times \mathbb{R}.$
\indent The notion of tube-nullity is also very natural from the point of view of geometric measure theory, and along with several variants, has been considered in many works. See, for example, \cite{car1}, \cite{chentubes}, \cite{csowise}, \cite{har}, \cite{orptube}, \cite{pyo}, \cite{shmsuotube}, \cite{shmsuomart}. It is often difficult to verify whether a given set is tube-null or not. Often the connection between tube-nullity and geometric measure theory arises from orthogonal projections. This can be seen below. 
\begin{prop}\label{hyper}
Let $K \subset \mathbb{R}^d.$ Suppose there exists a countable decomposition 
$$K = \bigcup_{n=1}^\infty K_n,$$
    a countable family of $d-1$-dimensional hyperplanes $\{V_n\}_{n \in \N},$ $V_n \in G(d,d-1),$  and orthogonal projections $P_{V_n}:\mathbb{R}^d \rightarrow V_n$ with $\calL^{d-1}(P_{V_n}(K_n)) = 0.$ Then $K$ is tube-null.
\begin{proof}
Let $\epsilon > 0.$ Since for each $n \in \N$ we have $\calL^{d-1}(P_n(K_n)) = 0$ we can find a covering of $P_{V_n}(K_n)$ by $d-1$ dimensional closed balls $\{B_{n,i}\}_{i \in \N}$ with $\sum_{i \in \N} |B_{n,i}|^{d-1} < \epsilon/2^n.$ Note here $|\cdot|$ denotes diameter. Let $\{T_{n,i}\}_{i,n \in \N}$ be the collection of tubes defined by $T_{n,i} = P_n^{-1}B_{n,i}.$ Since 
$$K_n \subset \bigcup_{i\in \N} T_{n,i}$$
and  
$$\sum_{i\in \N} w(T_{n,i})^{d-1} < \epsilon/2^n,$$
we therefore have
$$K \subset  \bigcup_{n,i\in \N} T_{n,i},$$
and  
$$\sum_{n,i \in \N} w(T_{n,i})^{d-1} < \epsilon.$$
\end{proof}
\end{prop}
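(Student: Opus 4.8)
The plan is to reduce everything to the elementary observation that a set of Lebesgue measure zero in a hyperplane admits efficient covers by balls, and then to pull these covers back through the orthogonal projections to obtain tubes. So, fix $\epsilon > 0$. For each $n \in \N$, the hypothesis $\calL^{d-1}(P_{V_n}(K_n)) = 0$ lets me choose a countable family of closed $(d-1)$-dimensional balls $\{B_{n,i}\}_{i \in \N}$ in $V_n$ that covers $P_{V_n}(K_n)$ and satisfies $\sum_{i \in \N} |B_{n,i}|^{d-1} < \epsilon 2^{-n}$, where $|\cdot|$ denotes diameter.

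Next I would set $T_{n,i} := P_{V_n}^{-1}(B_{n,i})$ and verify that each $T_{n,i}$ is genuinely a tube in the sense of Definition~\ref{tube-null}. Writing $B_{n,i}$ as the closed ball of radius $r_{n,i}$ about a point $p_{n,i} \in V_n$, the preimage $P_{V_n}^{-1}(B_{n,i})$ is exactly the set of points of $\R^d$ at distance at most $r_{n,i}$ from the line $\ell_{n,i}$ through $p_{n,i}$ orthogonal to $V_n$; that is, the closed $r_{n,i}$-neighbourhood of $\ell_{n,i}$. Hence $T_{n,i}$ is a tube with $w(T_{n,i}) = 2 r_{n,i} = |B_{n,i}|$.

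Finally I would assemble the covering. From $P_{V_n}(K_n) \subset \bigcup_{i} B_{n,i}$ we get $K_n \subset \bigcup_{i} P_{V_n}^{-1}(B_{n,i}) = \bigcup_{i} T_{n,i}$, so $K = \bigcup_n K_n \subset \bigcup_{n,i} T_{n,i}$, which is a countable family since $\N \times \N$ is countable. Summing widths, $\sum_{n,i} w(T_{n,i})^{d-1} = \sum_n \sum_i |B_{n,i}|^{d-1} < \sum_n \epsilon 2^{-n} = \epsilon$, which is what we want.

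The only step needing any care — and it is minor — is the first one: one must know that a set of $\calL^{d-1}$-measure zero can be covered by balls whose diameters raised to the power $d-1$ have arbitrarily small sum, not merely by boxes of small total volume. This follows from the comparability of Lebesgue measure with $(d-1)$-dimensional Hausdorff measure on $V_n$, or more elementarily by covering the null set with open cubes of small total volume and replacing each cube by its circumscribed ball, the resulting dimensional constant being absorbed into the $2^{-n}$ budget. Everything else is bookkeeping, so I do not expect any genuine obstacle.
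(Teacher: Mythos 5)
Your proposal is correct and follows essentially the same route as the paper: cover each null projection $P_{V_n}(K_n)$ by closed $(d-1)$-balls with $\sum_i |B_{n,i}|^{d-1} < \epsilon 2^{-n}$, pull back to tubes, and sum over $n$ and $i$. The extra care you take in verifying that $P_{V_n}^{-1}(B_{n,i})$ is genuinely a tube of width $|B_{n,i}|$, and in justifying the ball cover via comparability with circumscribed balls of cubes, only makes explicit what the paper leaves implicit.
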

\begin{prop}\label{prop.nottubenull}
Let $K \subset \R^d$ and suppose that $K$ supports a non-zero measure for which all of its orthogonal projections to $d-1$-dimensional planes are absolutely continuous with respect to $\calL^{d-1},$ each with a density which is uniformly bounded, then $K$ is not tube-null. 
\begin{proof}
 Let $\{T_i\}_{i \in \N}$ be a cover of $K$ with tubes. For each tube $T_i$ we have $\mu(T_i) \leq Cw(T_i)^{d-1},$ for some uniform $C > 0.$ Therefore
    $$0 < \mu(K) = \mu\bigg(\bigcup_{i\in \N} (K \cap T_i)\bigg) \leq \sum_{i\in \N}\mu(T_i) \leq C\sum_{i\in \N} w(T_i)^{d-1}.$$
\end{proof}
\end{prop}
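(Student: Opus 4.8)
The plan is to contradict the definition of tube-nullity directly: I will fix a non-zero Borel measure $\mu$ with $\spt\mu \subseteq K$ enjoying the stated projection property, and extract from it a strictly positive lower bound on $\sum_i w(T_i)^{d-1}$ that is valid for \emph{every} countable family of tubes $\{T_i\}_{i\in\N}$ covering $K$. By hypothesis there is a constant $M > 0$ so that for every $V \in G(d,d-1)$ the push-forward $(P_V)_\#\mu$ is absolutely continuous with respect to $\calL^{d-1}$ on $V$, with density at most $M$; crucially $M$ does not depend on $V$.

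The only geometric input needed is the shape of the orthogonal projection of a tube. If $T$ is a tube of width $w = w(T)$ about a line $\ell$ with unit direction $v$, and $V = v^\perp \in G(d,d-1)$, then $P_V$ collapses $\ell$ to a single point $x_0 \in V$ and carries $T$ onto the closed $(d-1)$-dimensional ball $B(x_0, w/2) \subseteq V$. Hence $\calL^{d-1}(P_V(T)) = c_{d-1}(w/2)^{d-1}$, where $c_{d-1} = \calL^{d-1}(B(0,1))$ in $\R^{d-1}$. Using the inclusion $T \subseteq P_V^{-1}(P_V(T))$ and the uniform density bound, this gives
$$\mu(T) \;\le\; (P_V)_\#\mu\big(P_V(T)\big) \;\le\; M\,\calL^{d-1}(P_V(T)) \;=\; \frac{M c_{d-1}}{2^{d-1}}\, w(T)^{d-1} \;=:\; C\, w(T)^{d-1},$$
where $C$ depends only on $\mu$ and $d$, and in particular is independent of the tube $T$ and of its orientation.

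Finally, let $\{T_i\}_{i\in\N}$ be any countable cover of $K$ by tubes. Countable subadditivity of $\mu$ and the previous display yield
$$0 \;<\; \mu(K) \;\le\; \sum_{i\in\N}\mu(K\cap T_i) \;\le\; \sum_{i\in\N}\mu(T_i) \;\le\; C\sum_{i\in\N} w(T_i)^{d-1},$$
so $\sum_{i\in\N} w(T_i)^{d-1} \ge \mu(K)/C > 0$. As this bound is the same for all tube covers of $K$, no such cover can have total $(d-1)$-width below $\mu(K)/C$, and hence $K$ is not tube-null. I do not anticipate a genuine obstacle here: the one point requiring attention is that the density bound be uniform over the hyperplane $V$, so that the constant $C$ can be chosen independently of the tube's direction — and that uniformity is exactly what the hypothesis grants.
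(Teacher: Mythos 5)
Your proof is correct and follows essentially the same route as the paper: bound $\mu(T)\le C\,w(T)^{d-1}$ uniformly over tubes using the projection hypothesis, then conclude by countable subadditivity. The only difference is that you spell out the derivation of that key inequality (projecting the tube onto the hyperplane orthogonal to its axis and using the uniform density bound), a step the paper simply asserts.
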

\indent Since orthogonal projections are Lipschitz mappings they cannot increase Hausdorff dimension, and so sets with Hausdorff dimension strictly less than $d-1$ are tube-null. Using the Besicovitch--Federer projection theorem, Carbery, Soria, and Vargas \cite{car2} showed that sets with $\sigma$-finite $(d-1)$-dimensional Hausdorff measure are tube-null. Given this, the question of tube-nullity is interesting for sets of Hausdorff dimension at least $d-1.$ Using a random construction, Shermkin and Suomala \cite{shmsuotube} showed that there are sets of any Hausdorff dimension between $d-1$ and $d$ inclusive, that are not tube-null, and excluding the case of Hausdorff dimension $d-1,$ can be taken to be Ahlfors-regular. The construction in $\R^2$ is roughly as follows: Start with the unit square and divide into four pieces and either keep all four squares or just one of them, where this choice is done via some appropriate probability distribution. Now, for each surviving square, divide into 4 new squares. Either keep what we have already, or for each surviving square in the previous step, keep only one of the new squares. Continue ad infinitum. For the details see \cite{shmsuotube}.
 
\indent Carbery, Soria, and Vargas had shown this before for $s \in (3/2,2]$ by giving explicit examples of rotationally invariant Cantor sets \cite{car2}. They also gave examples of sets which are tube-null, for $s \in (1,3/2)$ \cite{car2}.

\indent Tube-nullity itself does not impose a bound on the Hausdorff dimension: Let $C\subset \R^d $ be a set of Hausdorff dimension $d-1$ but $\calL^{d-1}(C) = 0$ (for example, a Cantor type construction). Then $C \times [0,1]$ has Hausdorff dimension $d$ but is tube-null. Heuristically, we should expect sets of larger Hausdorff dimension to be less likely to be tube-null.
\subsection{Examples of tube-null sets}
We have the result of Harangi \cite{har}. In the plane, let $R$ be the rotation by $60^\circ$ and $R'$ the rotation by $-60^\circ.$ Define $f_1,f_2,f_3,f_4:\R^2 \rightarrow \R$ by the maps
$$f_1(x,y) = (x,y)/3; \qquad f_2(x,y) = R(x,y)/3 + (1/3,0);$$
$$f_3(x,y) = R'(x,y)/3 + (2/3,0); \qquad f_4(x,y) = (x,y)/3 + (2/3,0).$$
Let $\calF$ be the IFS consisting of these maps and let $K$ be the attractor. We refer to $K$ as the \textit{Koch curve}
\begin{thm}[Theorem 1.1, \cite{har}]
    The Koch curve is tube-null. 
\end{thm}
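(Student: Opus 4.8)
The plan is to reduce, via Proposition~\ref{hyper}, to producing a countable decomposition $K=\bigcup_{n}K_{n}$ together with lines $\ell_{n}$ for which $\calL^{1}(P_{\ell_{n}}(K_{n}))=0$; tracing through the proof of Proposition~\ref{hyper} then converts such a decomposition into an explicit tube cover of total width $<\epsilon$ for any prescribed $\epsilon>0$. The structural facts I would use: every cylinder map $f_{w}=f_{i_{1}}\circ\cdots\circ f_{i_{m}}$ is a similarity of ratio $3^{-m}$ whose rotational part is a power of $R$, so the cylinders $f_{w}(K)$ occur in only six orientations; and $f_{1},f_{4}$ are homotheties that fix the real axis, acting there by $t\mapsto t/3$ and $t\mapsto t/3+2/3$. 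Thus, for a word $u\in\{1,4\}^{m}$, the translation part of $f_{u}$ lies on the real axis, the set of these translations is precisely the $m$-th level of the middle-thirds Cantor set construction, and all of the $f_{u}(K)$ share the orientation of $K$; hence $L_{m}:=\bigcup_{u\in\{1,4\}^{m}}f_{u}(K)$ is contained in $\calC_{m}\times\R$, where $\calC_{m}$ is the union of the $2^{m}$ intervals of length $3^{-m}$ at that level, so $L_{m}$ is covered by $2^{m}$ vertical tubes of total width $(2/3)^{m}\to0$. Applying a prefix $f_{w}$ merely rotates this picture, and running the same reasoning with $\{1,4\}$ replaced by $\{2,3\}$ shows that $\bigcup_{v\in\{2,3\}^{m}}f_{v}(K)$ — after separating the at most six orientation classes, whose translates now accumulate on the $\{f_{2},f_{3}\}$-attractor, a self-similar set of dimension $\log2/\log3<1$ — is covered by tubes of total width tending to $0$.

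Decompose $K$ by the symbolic address $\sigma\in\{1,2,3,4\}^{\N}$. If $\sigma$ is eventually a word in $\{1,4\}$, then $x\in f_{w}(C')$ for some finite word $w$ ending in a letter of $\{2,3\}$ (or $w=\varnothing$), where $C'$ is the $\{f_{1},f_{4}\}$-attractor lying on the real axis; each $f_{w}(C')$ is a similar copy of the middle-thirds Cantor set sitting on a line, so $\calH^{1}(f_{w}(C'))=0$ and its projection onto that line is null, and there are only countably many such $w$, so Proposition~\ref{hyper} handles this part. For the remaining $x$ the address $\sigma$ meets $\{2,3\}$ infinitely often; parse $\sigma$ into its maximal monochromatic runs, of lengths $\ell_{1},\ell_{2},\dots$, alternately over $\{1,4\}$ and over $\{2,3\}$. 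Fix a rapidly increasing threshold sequence $(J_{k})$. If $k$ is the first index with $\ell_{k}\ge J_{k}$, then $x$ lies in a cylinder $f_{w}(K)$, with $|w|=\ell_{1}+\cdots+\ell_{k-1}$, whose inner address opens with $J_{k}$ consecutive letters of a single type; by the first paragraph all such cylinders of that generation are covered simultaneously by a tube family of total width $\le\epsilon\,2^{-k}$, provided $J_{k}$ is taken large enough. Summing over $k$ covers every $x$ whose address contains an ``exceptionally long'' run, at a total cost below $\epsilon/2$.

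What remains is the set $Z$ of points with $\ell_{k}<J_{k}$ for all $k$, and covering $Z$ cheaply is the crux. The tension is clear: the thresholds $J_{k}$ must be large for the peeled families above to be cheap, but the set $\{\sigma:\text{all runs bounded}\}$ has Hausdorff dimension increasing to $\log4/\log3$ as the run-bounds grow, so a one-shot choice of thresholds cannot make $Z$ negligible. I would instead peel in a genuinely multi-scale way — at each scale, using the appropriate one of the three admissible tube directions ($0^{\circ},60^{\circ},120^{\circ}$), removing those sub-cylinders that have just closed off a run, with the removal thresholds chosen scale-by-scale — so that the leftover, on which the runs are short at \emph{every} scale, has Hausdorff dimension strictly less than $1$; then $\calH^{1}(Z)=0$, every orthogonal projection of $Z$ is $\calL^{1}$-null, and Proposition~\ref{hyper} closes the argument. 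This balancing of the total peeled width against the size of the leftover, where the precise self-similar geometry of the Koch curve has to be used rather than soft estimates, is the step I expect to be the main obstacle; the more general ``sponge''-type sets are treated by the same template, with $\{1,4\}$ replaced by a coordinate sub-family of the defining IFS.
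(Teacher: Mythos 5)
There is a genuine gap, and it is exactly where you flag it: the set $Z$ of points all of whose monochromatic runs stay below the thresholds is never covered. Your peeling removes cylinders according to the \emph{run structure} of the address, but the set of addresses with all runs bounded by any fixed constants is a subshift of finite type of Hausdorff dimension close to $\log 4/\log 3>1$, so no choice of thresholds makes $\calH^1(Z)=0$, and the promised ``genuinely multi-scale'' peeling that would shrink the leftover below dimension $1$ is asserted rather than constructed. The sentence ``so that the leftover \dots has Hausdorff dimension strictly less than $1$'' is the entire theorem; without a mechanism for it the argument proves nothing beyond tube-nullity of the (already easy) long-run part.

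The fix is to peel by a different statistic. Both this paper and the sources it follows classify a level-$n$ cylinder $f_\om(K)$ not by the runs in $\om$ but by the \emph{empirical distribution} of the projected digits $\Pi_v(\om_1),\dots,\Pi_v(\om_n)$ in each of finitely many rational directions $v$. The Fourier/compactness argument (Lemmas \ref{coef.lemma}--\ref{fourier}) plus the self-similarity of the projected measures (Lemma \ref{ent}) yields a uniform $\delta>0$ such that \emph{every} $\om\in\Ga^n$ has, in at least one admissible direction $v$, projected digit frequencies of entropy at most $1-\delta$; type-counting (Lemma \ref{count}) then bounds the number of such cylinders by $n^{O(1)}N^{n(1-\delta)}$, giving tubes of total width $n^{O(1)}N^{-n\delta}\to 0$. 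Crucially this is a partition of \emph{all} cylinders with no leftover set, which is precisely what your run-based decomposition cannot achieve. For the Koch curve specifically, the paper does not reprove Harangi's theorem directly: Theorem \ref{har} straightens the triangular lattice by the shear $G$, takes the union of the six rotated copies of $K$, checks that this union is invariant under $x\mapsto Nx \bmod 1$, and invokes the corollary that proper closed $\times N$-invariant subsets of the cube are tube-null. If you want a self-contained combinatorial proof in the spirit of your sketch, the statistic to peel on is entropy of projected digit frequencies, not run length.
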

We also have a large class of examples given by Py\"or\"al\"a, Shmerkin, Suomala, and Wu.  Fix an integer $N \geq 2$ and let $\Ga \subset \{0,\dots ,N-1\}^d$ such that $|\Ga| < N^d.$ Consider the homogeneous IFS on $[0,1]^d$ defined by 
$$\F = \Big\{f_i(x) = \frac{x}{N} + \frac{i}{N}\Big\}_{i \in \Gamma}.$$
Let $K$ be the attractor of $\F,$ i.e., the unique non-empty compact set $K$ such that
\begin{equation}\label{eq.attractor}
    K = \bigcup_{i \in \Ga}f_i(K).
\end{equation}
\begin{thm}[Theorem 1.1, \cite{pyo}]\label{tubenull.thm}\label{main}
    The set $K$ is tube-null.
\end{thm}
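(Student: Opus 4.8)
The plan is to reduce matters, via Proposition~\ref{hyper}, to the following: write $K=\bigcup_{n}K_{n}$ as a countable union so that for every $n$ there is a coordinate hyperplane $V_{n}=\{x_{i_{n}}=0\}$ with $\mathcal L^{d-1}\bigl(P_{V_{n}}(K_{n})\bigr)=0$. Throughout I use the symbolic description of $K$: a point $x\in K$ equals $\sum_{k\ge1}\mathbf a_{k}/N^{k}$ with digits $\mathbf a_{k}=(a_{k}^{(1)},\dots,a_{k}^{(d)})\in\Gamma$, so that the $m$-th coordinate $x_{m}$ has base-$N$ expansion $\bigl(a_{k}^{(m)}\bigr)_{k\ge1}$.

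First I would dispose of the easy case. If for some coordinate $m$ the image of $\Gamma$ under forgetting the $m$-th coordinate is a \emph{proper} subset of $\{0,\dots,N-1\}^{d-1}$, then the corresponding projection of $K$ is a homogeneous self-similar set in $[0,1]^{d-1}$ defined by at most $N^{d-1}-1$ maps of ratio $1/N$; hence it has Hausdorff dimension strictly below $d-1$ and $\mathcal L^{d-1}$-measure zero, and Proposition~\ref{hyper} with $V=\{x_{m}=0\}$ concludes. So I may assume that each such face projection of $\Gamma$ is the full grid. As $|\Gamma|<N^{d}$, there is then a ``column'' $c\in\{0,\dots,N-1\}^{d-1}$ whose fibre $\Gamma_{c}:=\{b:(c,b)\in\Gamma\}$ is a proper, nonempty subset of $\{0,\dots,N-1\}$: above this column $K$ is present, but confined to the thinner slab indexed by $\Gamma_{c}$.

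Next, split $K=K_{A}\sqcup K_{B}$, where $K_{A}$ consists of those $x$ for which the digit block $(a_{k}^{(1)},\dots,a_{k}^{(d-1)})$ equals $c$ for only finitely many $k$, and $K_{B}$ is the remainder. For $x\in K_{A}$, the projection of $x$ forgetting the last coordinate lies in a countable union of affine copies of the homogeneous self-similar set in $[0,1]^{d-1}$ with digit set $\{0,\dots,N-1\}^{d-1}\setminus\{c\}$; by the computation just made this union has $\mathcal L^{d-1}$-measure zero, so $K_{A}$ is tube-null by Proposition~\ref{hyper} with $V=\{x_{d}=0\}$. One can iterate this kind of peeling --- passing to sub-sponges inside $\Gamma_{c}$, and to other coordinates --- but no finite amount of it exhausts $K$.

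The remaining set $K_{B}$ is the heart of the argument, and resolving it is what I expect to be the main obstacle. On $K_{B}$ the point is pinned, at infinitely many scales $N^{-k}$, into the thin slab over $c$, so that its last coordinate is trapped in the Cantor set generated by $\Gamma_{c}$ along an infinite, but \emph{point-dependent}, subsequence of scales; no single orthogonal projection sees this, because ``a prescribed digit block recurs infinitely often'' is a generic, not a null, condition, so neither a projection nor a frequency/large-deviations argument yields a null image. (Already for the Sierpi\'nski carpet, where $d=2$, $N=3$, $c=1$, $\Gamma_{c}=\{0,2\}$, the set $K_{B}$ is all of $K$ up to a subset of smaller dimension, and $K$ has no null orthogonal projection onto a line.) One is thus forced to cover $K_{B}$ by tubes running in several directions and organised over many scales, rather than by one projection per piece. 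The resource to exploit is that at every scale, within every cell of the previous scale that meets $K$, a fixed positive proportion $1-|\Gamma|/N^{d}$ of the $N^{d}$ sub-cells of side $N^{-1}$ times smaller is missing; one should choose tubes --- explicitly, in terms of the digit structure, which is where the explicit description promised in the abstract comes from --- that thread these holes efficiently enough that the sum of the $(d-1)$-st powers of their widths, over all scales used, stays below $\epsilon$. Making this multiscale estimate work is the crux; everything preceding it is routine.
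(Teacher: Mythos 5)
There is a genuine gap: your argument correctly disposes of the easy cases (a proper face-projection of $\Gamma$, and the set $K_A$ where the column $c$ recurs only finitely often), but you explicitly leave the main part $K_B$ unresolved, and $K_B$ is essentially all of $K$. The missing idea is that one should not restrict to coordinate projections, nor try to make any single projected image null. The paper works with all integer directions $v\in\Z^d\setminus\{0\}$, i.e.\ the maps $x\mapsto x\cdot v$, and proves (Lemma \ref{fourier}, via Fourier coefficients of $\times N$-invariant measures on $K$ and weak* compactness of $\calM(K,T)$) that there is a \emph{finite} set $\calV$ of such directions so that every $T$-invariant measure on $K$ has a non--absolutely continuous projection in some $v\in\calV$. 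Feeding self-similar measures into this yields the key quantitative dichotomy (Lemma \ref{ent}): there is $\delta>0$ such that for every probability vector $\p$ on $\Gamma$, some direction $v\in\calV$ has $\E(M\Pi_v\p)\le 1-\delta$, where $M\Pi_v\p$ is the induced digit distribution mod $N$ of the projected IFS.

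This is exactly what rescues the ``frequency/large-deviations argument'' you dismiss. The dichotomy is applied not to asymptotic digit frequencies of points but to the empirical distribution $L_n^\om$ of each level-$n$ word $\om\in\Gamma^n$: every level-$n$ cell is assigned to a direction $v\in\calV$ in which its projected empirical distribution has entropy $\le 1-\delta$, so there is no residual ``typical in every direction'' set to worry about. The method of types (Lemmas \ref{lem.upperboundent}--\ref{lem.typeclassbound}) then bounds the number of such atypical projected words by $n^{O(1)}N^{n(1-\delta)}$, so the cells assigned to $v$ lie in that many slabs of width $\sim N^{-n}$ orthogonal to $v$, of total $(d-1)$-width $\lesssim n^{O(1)}N^{-n\delta}\to 0$. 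Your proposed route --- threading tubes through the holes scale by scale --- is not how the paper proceeds, and without the entropy dichotomy over a finite set of directions you have no mechanism to make the widths summable; as written, the proof is incomplete at its crux.
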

Combining with what is known, this shows that for every $s \in [d-1,d]$ there exists a set $K$ with $\dimh K = s$ for which $K$ is tube-null. Therefore, we have sets which are tube-null and sets which are not tube-null at every Hausdorff dimension $s \in [d-1,d].$ They also showed the following.
\begin{definition}
Define the map $T:[0,1)^d \rightarrow [0,1)^d$ by $T(x) = Nx\mod1.$ We call the map $T$  the $\times N$-\textit{map}.
\end{definition}
\begin{cor}[Theorem 1.1, \cite{pyo}]
    Let $L \subsetneq [0,1]^d$ be a closed $\times N$ invariant set. Then $L$ is tube-null. 
    \end{cor}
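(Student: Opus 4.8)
The plan is to deduce the corollary from Theorem~\ref{main} by showing that a proper closed $\times N$-invariant set is contained in the attractor of a sub-IFS of exactly the type considered there, after passing, if necessary, to a finer generation of $N$-adic cubes. I take $\times N$-invariance to mean $T(L)\subseteq L$; the argument is unchanged if one instead assumes $T(L)=L$.

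First I would produce a single $N$-adic cube, at some scale $N^{-n}$, that is disjoint from $L$. Since $L$ is closed and $L\neq[0,1]^d$, its complement contains a closed ball $\bar B(z,r)$, and as soon as $\sqrt{d}\,N^{-n}<r$ every generation-$n$ $N$-adic cube containing $z$ lies inside $\bar B(z,r)$ and hence is disjoint from $L$. Fix such an $n\geq 1$ and let $\Gamma\subset\{0,\dots,N^n-1\}^d$ be the set of multi-indices $\mathbf j$ for which the cube $N^{-n}(\mathbf j+[0,1]^d)$ meets $L$; then $\Gamma$ is a proper subset, since it omits the cube just found, so $|\Gamma|<(N^n)^d$. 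The maps $g_{\mathbf j}(x)=N^{-n}x+N^{-n}\mathbf j$ with $\mathbf j\in\Gamma$ thus form a homogeneous IFS of precisely the form appearing in Theorem~\ref{main}, with $N$ replaced by the integer $N^n\geq 2$ and digit set $\Gamma$; its attractor $K$ therefore lies in $[0,1]^d$ and is tube-null. It remains to show $L\subseteq K$ up to a tube-null error.

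For the inclusion I would combine forward invariance with the coding description of $K$. A point $x$ with a unique base-$N$ expansion lies in $K$ exactly when, for every $k$, the $k$ base-$N^n$ ``digit blocks'' of its generation-$nk$ cube all belong to $\Gamma$; and the $i$-th such block is the index of the generation-$n$ cube containing $T^{n(i-1)}(x)$, because iterating $T^n$ shifts the base-$N$ expansion. Now if $x\in L$ then $T(L)\subseteq L$ gives $T^{n(i-1)}(x)\in L$, so the closed cube $N^{-n}(\mathbf j_i+[0,1]^d)$ with $\mathbf j_i$ the $i$-th block contains a point of $L$ and hence $\mathbf j_i\in\Gamma$. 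As this holds for all $i$ and $k$, we get $x\in K$; thus every $x\in L$ that does not lie on an $N$-adic hyperplane belongs to $K$. This coding step is the heart of the matter, though it is short.

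The one point needing care is that $T(x)=Nx\bmod 1$ is not literally defined on all of $[0,1]^d$ and that a point on an $N$-adic hyperplane lies in several $N$-adic cubes, so the ``digit block'' description is ambiguous there; I expect this bookkeeping to be the only real obstacle, and it is routine. The countably many $N$-adic hyperplanes have finite $(d-1)$-dimensional Hausdorff measure and are therefore tube-null, as recalled in the introduction, so $L_0:=L\cap\bigcup\{\text{$N$-adic hyperplanes}\}$ is tube-null; every point of $L\setminus L_0$ has a unique $N$-ary expansion on which $T$ acts as the genuine shift, so the previous paragraph yields $L\setminus L_0\subseteq K$. Hence $L\subseteq K\cup L_0$ is tube-null, and no geometric input beyond Theorem~\ref{main} is required; the remaining verifications — that the $g_{\mathbf j}$ with $\mathbf j\in\Gamma$ cover $L\setminus L_0$ and that the sub-IFS meets the hypotheses of Theorem~\ref{main} — are immediate.
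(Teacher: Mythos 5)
Your argument is correct and is essentially the paper's own proof: the paper likewise finds a block length $q$ (your $n$) such that some word in $(\{0,\dots,N-1\}^d)^q$ is missing from the symbolic coding of $L$, takes $K$ to be the self-similar set for $N^q$ with digit set the words that do appear, and concludes $L \subset K$ so that Theorem \ref{main} applies. Your additional bookkeeping for points on $N$-adic hyperplanes, disposed of via their tube-nullity, makes explicit a detail the paper leaves implicit in asserting $L \subset K$.
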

    We include the short proof.
    \begin{proof}
Given any closed $T$-invariant $L \subsetneq [0,1]^d$ we can find $q$ such that not all words in $(\{0,\dots ,N-1\}^d)^q$ appear in $L$ under the natural symbolic coding. Let $K$ be the self-similar set as above, corresponding to $N^q$ and $\Ga$ in correspondence with the words of length $q$ that appear in $L,$ then $L \subset K \subsetneq [0,1]^d.$ 
\end{proof}

Below, we give a short proof of Theorem \ref{tubenull.thm}. The proof is essentially a shorter discretised version of the proof given in \cite{pyo}, that is, we consider the cylinder sets at level $n,$ and find a fairly explicit covering of these sets by tubes. 
\subsection{Preliminaries and notation}\label{ifs}
Let $A \subset \R^d$ be closed. We call a map $f:A \rightarrow A$ a \textit{contraction} if there exists $0 < r < 1$ such that
$$|f(x) - f(y)| \leq r|x-y| \text{ for all } x,y \in A.$$
If we have equality in the above, then we call $f$ a \textit{contracting similarity}. We call a finite family $\F$ of contractions an \textit{iterated function system} or IFS.   If a set $A \neq \emptyset$ is such that
$$A = \bigcup_{f \in \F}f(A)$$
then we call $A$ the \textit{attractor} of $\F.$ An IFS $\F$ satisfies the \textit{open set condition} if there exists a non-empty bounded open set $V$ such that
$$\bigcup_{f \in \F}f(V) \subset V$$
with the union disjoint. It is well-known that every IFS has a unique attractor. We call attractors of IFSs consisting of contracting similarities \textit{self-similar}.
Often, we will index $\F$ by a finite set $\Gamma,$ that is, $\calF = \{f_i\}_{i \in \Ga}.$ Sets of the form $f_{\om_1} \circ \dots  f_{\om_n}(K)$ are called \textit{level-n basic sets}. We call the map $\pi: \Ga^\N \rightarrow A$ defined by
$$\pi(\om) = \pi((\om_1, \om_2,\dots )) = \lim_{k \rightarrow \infty}f_{\om_k}\circ \dots  \circ f_{\om_1}(0)$$
the \textit{natural projection} from $\Ga^\N$ to $A.$ The map is surjective, but not necessarily injective. 

\indent We now define a natural class of measures on $A.$ Let $\mathbf{p} = \{p_i\}_{i \in \Ga} \in \mathcal{M}_1(\Gamma).$  We can then define the Borel probability measure $\pr$ on $\Gamma^\mathbb{N}$ coming from $\mathbf{p}$ via the product topology. The below is well known and can be found in \cite{fal}.
\begin{thm}\label{selfsimilar.thm}
    There is a unique Borel probability measure $\mu$ on $A$ with
    $$\mu = \sum_{i \in \Ga}p_if_i\mu.$$
\end{thm}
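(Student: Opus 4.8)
The plan is to realise $\mu$ as the unique fixed point of a natural contraction on the space of probability measures carried by the attractor. Write $\calM_1(A)$ for the set of Borel probability measures on $A$ (which is compact), and equip it with the \emph{Hutchinson metric}
\[
d_H(\mu,\nu) \;=\; \sup\Big\{\,\Big|\int g\,d\mu - \int g\,d\nu\Big| \;:\; g\colon A\to\R,\ \Lip g\le 1\,\Big\}.
\]
The first task is to check that $(\calM_1(A),d_H)$ is a complete metric space. Finiteness of $d_H$ uses that $A$ is bounded; that $d_H$ separates points uses that the Lipschitz functions are uniformly dense in $C(A)$ (Stone--Weierstrass); and completeness follows once one knows that $d_H$ metrises the weak-$*$ topology on $\calM_1(A)$, which is compact by the Banach--Alaoglu and Riesz representation theorems. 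This is essentially the only point requiring care; the rest is a short computation. (All of this is classical; see \cite{fal}.)

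Next, define $\Phi\colon\calM_1(A)\to\calM_1(A)$ by $\Phi\nu = \sum_{i\in\Ga}p_i\,(f_i)_*\nu$, where $(f_i)_*\nu$ denotes the pushforward of $\nu$ by $f_i$. This is well defined: each $(f_i)_*\nu$ is a Borel probability measure carried by $f_i(A)\subseteq A$, and since $\sum_{i\in\Ga}p_i=1$ the convex combination $\Phi\nu$ is again a probability measure on $A$. With this notation the equation $\mu=\sum_{i\in\Ga}p_i f_i\mu$ of the statement is precisely the fixed-point equation $\mu=\Phi\mu$.

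Finally, one shows that $\Phi$ is a contraction. Set $r=\max_{i\in\Ga}r_i<1$, where $0<r_i<1$ is a Lipschitz constant for $f_i$. For $g\colon A\to\R$ with $\Lip g\le1$ the change-of-variables formula gives $\int g\,d(f_i)_*\nu=\int (g\circ f_i)\,d\nu$, and since $g\circ f_i$ is $r_i$-Lipschitz with $r_i\le r$ we obtain
\[
\int (g\circ f_i)\,d\mu - \int (g\circ f_i)\,d\nu \;\le\; r\, d_H(\mu,\nu).
\]
Multiplying by $p_i$, summing over $i\in\Ga$, and taking the supremum over all such $g$ (replacing $g$ by $-g$ to handle the absolute value) yields $d_H(\Phi\mu,\Phi\nu)\le r\,d_H(\mu,\nu)$. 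The Banach fixed-point theorem then produces a unique fixed point $\mu\in\calM_1(A)$, which is the asserted measure. One may instead exhibit the fixed point directly: with $\pr$ the Bernoulli measure on $\Ga^\N$ determined by $\mathbf{p}$ and $\pi$ the natural projection, $\pi_*\pr$ satisfies $\mu=\Phi\mu$ because of the shift-invariance structure of $\pr$, and uniqueness is then still supplied by the contraction estimate above. The main obstacle, as flagged, is verifying that $(\calM_1(A),d_H)$ is a complete metric space; everything else reduces to a direct computation.
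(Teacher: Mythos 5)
Your proof is correct and is precisely the classical Hutchinson contraction-mapping argument that the paper itself relies on: the paper gives no proof of this theorem, citing \cite{fal}, and its Theorem \ref{selfsimunique.thm} records exactly the contraction property of $\varphi(\nu)=\sum_{i\in\Ga}p_i f_i\nu$ in the metric you call $d_H$. Nothing further is needed.
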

Measures defined using the above procedure are called \textit{self-similar}.
\subsection{Sketch of the proof}
We give a sketch proof. The reader is invited to picture the Sierpi\'nski carpet. The proof in general is identical. A Fourier analytic lemma below will give us a finite set of directions for which we will be able to find the prescribed covering of tubes. A fact, see Section \ref{sect.final} is that one can use horizontal, vertical, and diagonal (in both directions) tubes. Say we are at stage $n$ of the construction. In each of these directions, consider the projected IFS, defined below, and consider the level-$n$ basic sets for which the digit expansion in the projected IFS is away from being typical. Take the collection of preimages under the respective orthogonal projections. This will be an efficient cover: if we take a level-$n$ basic set in the Sierpi\'nski carpet, say, we will be able to show that its digit expansion in at least one of the prescribed directions will be away from being typical.   
\subsection*{Acknowledgements} The author would like to thank Andr\'as M\'ath\'e for all of his advice and suggestions. Extended thanks are given to Tim Austin and Tam\'as Keleti, for their suggestions which greatly improved the presentation. Further thanks are given to Attila G\'asp\'ar and Pablo Shmerkin for their advice and suggestions. Finally, thanks are given to the anonymous referee whose comments ensured a greater quality of exposition.

\section{Projections of $\times N$-invariant measures}
\begin{definition}
Define $M_n: \R \rightarrow [0,n)$ to be the mod $n$ map, which maps $x \in \R$ to the unique $0 \leq r < n$ which solves $x = nq + r$ for some $q \in \Z.$ If $n=1$ we refer to $M_1$ by $M.$
\end{definition}
Fix $K$ as in \eqref{eq.attractor}. Let $\calM(K,T)$ denote all the $T$-invariant measures supported on $K.$ The following is a basic fact about the space $\calM(K,T).$
\begin{lemma}[p97-p88, \cite{einsiedlerwardbook}]
    The space $\calM(K,T)$ is non-empty, and compact with respect to the weak* topology.
\end{lemma}
The following well known result says that measures on the torus are uniquely determined by their Fourier coefficients at integer frequencies. 
\begin{thm}[(3.66), \cite{mat2}]\label{fouriercircle.thm}
Let $\mu$ be a Borel measure supported on $[0,1]^d.$ Then $\mu = \Le^d_{|[0,1]^d}$ if and only if $\hat\mu(v) = 0$ for all $v \in \Z^d.$ Further, a sequence of probability measures $(\mu_n)_{n \in \N}$ in $[0,1]^d$ converges to $\mu \in \calM([0,1]^d)$ if and only if $\hat\mu_n(v) \rightarrow \hat\mu(v)$ for all $v \in \Z^d.$
\end{thm}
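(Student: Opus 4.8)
The plan is to derive both parts of the statement from a single fact: the trigonometric polynomials on the torus $\mathbb{T}^d=\R^d/\Z^d$ are dense, in the uniform norm, in $C(\mathbb{T}^d)$, by the Stone--Weierstrass theorem. The easy direction of the first assertion is then a direct computation: for $v=(v_1,\dots,v_d)\in\Z^d$,
\[
\int_{[0,1]^d}e^{-2\pi i v\cdot x}\,dx=\prod_{j=1}^{d}\int_{0}^{1}e^{-2\pi i v_j t}\,dt,
\]
which equals $1$ when $v=0$ and vanishes for every other $v$, so $\widehat{\Le^d_{|[0,1]^d}}$ is the indicator of $\{0\}$ on $\Z^d$. (For a general finite Borel measure the hypothesis must of course be read as $\hat\mu(v)=0$ for all $v\neq 0$ together with $\mu([0,1]^d)=1$, i.e.\ $\hat\mu\equiv\widehat{\Le^d_{|[0,1]^d}}$ on $\Z^d$.)

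For the converse I would push the measure onto the torus. Let $\pi:[0,1]^d\to\mathbb{T}^d$ be the canonical projection and set $\tilde\mu=\pi_\ast\mu$; since each character $x\mapsto e^{2\pi i v\cdot x}$ factors through $\pi$, the measures $\tilde\mu$ and Haar measure on $\mathbb{T}^d$ have the same Fourier coefficients. The trigonometric polynomials form a conjugation-closed, point-separating subalgebra of $C(\mathbb{T}^d)$ containing the constants, hence are dense by Stone--Weierstrass; so agreement on the characters upgrades to $\int f\,d\tilde\mu=\int f\,d(\mathrm{Haar})$ for all $f\in C(\mathbb{T}^d)$, and the Riesz representation theorem gives $\tilde\mu=\mathrm{Haar}$. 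To descend back to the cube, note that $\pi$ restricts to a Borel isomorphism $[0,1)^d\to\mathbb{T}^d$ while the leftover boundary $[0,1]^d\setminus[0,1)^d$ maps into a finite union of proper subtori, a Haar-null set; splitting $\mu$ into its restrictions to these two pieces and using positivity of the pushforward of the first forces the boundary piece to vanish, and then pulling Haar measure back through the isomorphism yields $\mu=\Le^d_{|[0,1]^d}$.

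The second assertion rests on the same ideas. The forward direction is immediate: weak$^\ast$ convergence tests against all of $C([0,1]^d)$, and the characters belong to this space, so $\hat\mu_n(v)\to\hat\mu(v)$. For the converse I would use compactness: the probability measures on the compact set $[0,1]^d$ are weak$^\ast$ sequentially precompact, so it suffices to identify every subsequential limit. If $\mu_{n_k}\to\nu$ weak$^\ast$, the forward direction gives $\hat\nu\equiv\hat\mu$ on $\Z^d$; in the form in which the result is used here, namely $\mu=\Le^d_{|[0,1]^d}$, the first assertion applied to $\nu$ forces $\nu=\Le^d_{|[0,1]^d}$, so every subsequential limit is $\mu$ and the whole sequence converges. (For a general limit $\mu$ one reads $[0,1]^d$ as $\mathbb{T}^d$, so that the characters form a determining class of functions.)

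The one place where genuine care is needed — as opposed to routine bookkeeping — is that the characters $e^{2\pi i v\cdot x}$ do \emph{not} separate points of $[0,1]^d$, since $\pi$ identifies opposite faces; this is precisely why Stone--Weierstrass must be applied on $\mathbb{T}^d$ rather than on the cube, and why the short boundary argument above is required to transfer the conclusion back to $[0,1]^d$. That transfer is harmless in every application below, since the comparison measure $\Le^d_{|[0,1]^d}$ assigns $\partial[0,1]^d$ zero mass.
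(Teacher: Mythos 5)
The paper does not prove this statement at all --- it is quoted from Mattila's book as a known fact --- so there is no internal proof to compare against; what you have supplied is the standard argument, and it is correct. Your computation of $\widehat{\Le^d_{|[0,1]^d}}$, the passage to $\mathbb{T}^d$ via $\pi_\ast$, the Stone--Weierstrass/Riesz identification of the pushforward with Haar measure, and the compactness-plus-uniqueness-of-subsequential-limits argument for the second assertion are all sound. You also correctly flag the two places where the theorem as literally printed needs repair: the hypothesis must exclude $v=0$ (otherwise $\hat\mu(0)=0$ forces $\mu=0$), and the ``if'' direction of the second assertion is false for a general limit $\mu$ on the cube, since the integer characters do not separate points of $[0,1]^d$ (e.g.\ $\delta_0$ and $\delta_{(1,0,\dots,0)}$ have identical integer Fourier coefficients). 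Your observation that both repairs are harmless here is right: every invocation in the paper (Lemmas \ref{coef.lemma}, \ref{eq}, \ref{fourier}) either uses the forward direction or takes the limit measure to be $\Le^d_{|[0,1]^d}$, which charges no mass to $\partial[0,1]^d$, so your boundary-splitting argument closes the gap. The only stylistic remark is that the descent from $\mathbb{T}^d$ back to the cube could be stated slightly more explicitly --- one evaluates the identity $\mathrm{Haar}=\pi_\ast(\mu|_{[0,1)^d})+\pi_\ast(\mu|_{\partial^+})$ on the Haar-null image of the upper boundary to force $\mu(\partial^+)=0$ --- but that is exactly the argument you sketch, so nothing is missing.
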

The following three lemmas are contained in Lemma 4.1 in \cite{pyo}. We include their simple proofs for the convenience of the reader. 
\begin{lemma}\label{coef.lemma}
    There exists an absolute constant $c > 0$ so that for all $\mu \in \calM(K,T)$ we can find a $v \in \Z^d\setminus\{0\}$ so that $|\hat\mu(v)| > c.$
    \begin{proof}
        Suppose the result is false. Then there exists a sequence of measures $(\mu_n)_{n \in \N}$ in $\calM(K,T)$ so that $|\mu_n(v)| \rightarrow 0$ for all $v \in \Z^d\setminus\{0\}.$ Therefore, by Theorem \ref{fouriercircle.thm}, we have that $\mu_n \rightarrow \calL^d_{|[0,1]^d}$ weak*. Since $\calM(K,T)$ is compact, by passing to subsequence we can assume that $(\mu_n)_{n \in \N}$ converges to $\mu \in \calM(K,T)$ weak*. But then $\calL^d_{|[0,1]^d} \in \calM(K,T),$ a contradiction, since $\calL^d(K) = 0.$
     \end{proof}
\end{lemma}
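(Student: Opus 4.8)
The plan is to argue by contradiction, using only soft input: the weak* compactness of $\calM(K,T)$ recalled above, and the Fourier-analytic description of Lebesgue measure on the torus from Theorem \ref{fouriercircle.thm}. Suppose no such absolute constant $c>0$ exists. Then for each $n\in\N$ the choice $c=1/n$ fails, so there is a measure $\mu_n\in\calM(K,T)$ with $|\hat\mu_n(v)|\le 1/n$ for every $v\in\Z^d\setminus\{0\}$. Each $\mu_n$ is a probability measure, so $\hat\mu_n(0)=1$ for all $n$; consequently $\hat\mu_n(v)\to 0$ for every $v\in\Z^d\setminus\{0\}$ while $\hat\mu_n(0)\to 1$. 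These limiting values are precisely the Fourier coefficients of $\calL^d_{|[0,1]^d}$, again by Theorem \ref{fouriercircle.thm}.

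Next I would feed this into the \emph{convergence} clause of Theorem \ref{fouriercircle.thm}: since the Fourier coefficients of $\mu_n$ converge pointwise on $\Z^d$ to those of $\calL^d_{|[0,1]^d}$, the sequence $(\mu_n)_{n\in\N}$ converges weak* to $\calL^d_{|[0,1]^d}$. Now invoke compactness: $\calM(K,T)$ is weak* compact, hence weak* closed inside the (Hausdorff) space of Borel probability measures on $[0,1]^d$, so the weak* limit $\calL^d_{|[0,1]^d}$ must itself lie in $\calM(K,T)$. In particular $\spt \calL^d_{|[0,1]^d}\subset K$, which forces $\calL^d(K)>0$. But $K$ is the attractor of the IFS \eqref{eq.attractor}, consisting of $|\Gamma|<N^d$ homotheties of $[0,1]^d$ of ratio $1/N$; iterating \eqref{eq.attractor} gives $\calL^d(K)\le\big(|\Gamma|/N^{d}\big)^{m}$ for every $m$, so $\calL^d(K)=0$. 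This contradiction establishes the lemma for some absolute $c>0$. (The argument gives no explicit value of $c$, and none is needed in what follows.)

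I do not anticipate a genuine obstacle here: the content is entirely a compactness-plus-Fourier-uniqueness argument and there are no estimates to push through. The only places deserving a little care are writing down the correct negation of the nested-quantifier statement (so that one genuinely obtains a single sequence $\mu_n$ with $\hat\mu_n(v)\to 0$ for all $v\neq 0$), citing the convergence part of Theorem \ref{fouriercircle.thm} rather than just its uniqueness part, and recording the standard fact $\calL^d(K)=0$, which is ultimately the reason Lebesgue measure cannot be $T$-invariant on $K$.
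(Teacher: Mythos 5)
Your proposal is correct and follows essentially the same route as the paper: negate the statement to extract a sequence $\mu_n\in\calM(K,T)$ whose nonzero Fourier coefficients tend to $0$, use the convergence clause of Theorem \ref{fouriercircle.thm} to conclude $\mu_n\to\calL^d_{|[0,1]^d}$ weak*, and use weak* compactness of $\calM(K,T)$ to place the limit in $\calM(K,T)$, contradicting $\calL^d(K)=0$. Your version is slightly more careful than the paper's about the quantifier negation and about justifying $\calL^d(K)=0$ via $|\Gamma|<N^d$, but these are refinements of the same argument, not a different one.
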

\begin{lemma}\label{eq}
For any $\mu \in \M(K,T)$  and any $v \in \Z^d \setminus \{0\}$ we have  $P_v\mu \ll \Le^1$ if and only if $MP_v\mu = \Leb.$ 
\begin{proof}
Fix $v \in \Z^d\setminus \{0\}$ suppose that $MP_v\mu:= P_v\mu \circ M^{-1} = \Leb.$ Let $N \subset \R$ be such that $\Le^1(N) = 0.$ Then 
$$P_v\mu(N) \leq P_v\mu(M^{-1}M(N)) = MP_v\mu(M(N)) = \Le^1(M(N)) = 0,$$
and so $P_v\mu \ll \calL^1.$

\indent Now suppose that $MP_v\mu \neq \Leb.$ By Theorem \ref{fouriercircle.thm} there exists $z \in \Z\setminus \{0\}$ such that $\widehat{MP_v\mu}(z) \neq 0.$ We then have, using the $T$-invariance of $\mu,$
\begin{align*}
\widehat{MP_v\mu}(z) &= \int e^{-2\pi i xz} dMP_v\mu(x) \\
&= \int e^{-2\pi i M(x)z}dP_v\mu(x) \\
&= \int e^{-2\pi i xz}dP_v\mu(x) \\
&= \int e^{-2\pi i x\cdot vz}d\mu(x) \\
&= \int e^{-2\pi i x\cdot vz}dT\mu(x) \\
&= \int e^{-2\pi i x\cdot Nvz}d\mu(x) \\
&= \int e^{-2\pi i xNz}dMP_v\mu(x) \\
&= \widehat{MP_v\mu}(Nz).
\end{align*}
Therefore by iterating we have that for all $k \in \N,$ $\widehat{MP_v\mu}(N^kz) = \widehat{MP_v\mu}(z).$ Then since  for each $a \in \Z$
$$\widehat{MP_v\mu}(a) =  \int e^{-2\pi i xa} dMP_v\mu(x) = \int e^{-2\pi i xa} dP_v\mu(x) = \widehat{P_v\mu}(a),$$
it follows that for all $k \in \N,$ $\widehat{P_v\mu}(N^kz) = \widehat{P_v\mu}(z)\neq 0$ and therefore $P_v\mu \not\ll \Le^1$ by the Riemann--Lebesgue lemma. 
\end{proof}
\end{lemma}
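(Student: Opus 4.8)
The plan is to prove the two implications separately, detecting failure of absolute continuity through a non-vanishing Fourier coefficient in one direction and through the Riemann--Lebesgue lemma in the other. First I would settle the implication $MP_v\mu = \Leb \Rightarrow P_v\mu \ll \Le^1$ straight from the definitions. Given a Lebesgue-null set $N \subset \R$, write $\R = \bigcup_{k \in \Z}[k,k+1)$; since $M$ acts as a translation on each $[k,k+1)$ it preserves Lebesgue measure there, so $M(N)$ is a Lebesgue-null subset of $[0,1)$. As $N \subset M^{-1}(M(N))$ we obtain
$$P_v\mu(N) \le P_v\mu\big(M^{-1}(M(N))\big) = (MP_v\mu)\big(M(N)\big) = \Leb\big(M(N)\big) = 0,$$
which gives $P_v\mu \ll \Le^1$.

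For the converse I would argue by contraposition. If $MP_v\mu \neq \Leb$ then, since $MP_v\mu$ is a Borel probability measure on $[0,1)$, Theorem~\ref{fouriercircle.thm} yields $z \in \Z \setminus \{0\}$ with $\widehat{MP_v\mu}(z) \neq 0$. The point is to promote this to a bound that survives along the progression $N^k z$. Using that $M(x) - x \in \Z$, that $Tx - Nx \in \Z^d$, and that $vz \in \Z^d$, so that each substitution below alters the exponential phase only by an integer, one runs the chain
\begin{align*}
\widehat{MP_v\mu}(z) &= \widehat{P_v\mu}(z) = \hat\mu(vz) = \widehat{T\mu}(vz) \\
&= \hat\mu(Nvz) = \widehat{P_v\mu}(Nz) = \widehat{MP_v\mu}(Nz),
\end{align*}
where the equality $\hat\mu(vz) = \widehat{T\mu}(vz)$ is just the $T$-invariance $\mu = T\mu$. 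Iterating gives $\widehat{P_v\mu}(N^k z) = \widehat{MP_v\mu}(z) \neq 0$ for every $k \in \N$, so $\widehat{P_v\mu}$ does not vanish at infinity; since $P_v\mu$ is a finite measure, the Riemann--Lebesgue lemma rules out an $L^1$ density, i.e.\ $P_v\mu \not\ll \Le^1$.

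The step needing the most care is that displayed chain: one must check that replacing $M(x)$ by $x$, replacing $Tx$ by $Nx$, and passing between the scalar variable of $P_v\mu$ and $x\cdot v$ each leave $e^{-2\pi i(\cdot)}$ unchanged, and this hinges entirely on $v$ and $z$ being integral --- the identity collapses for irrational frequencies, which is exactly why the lemma lives over the lattice $\Z^d$. Everything else is routine measure-theoretic bookkeeping, the only external inputs being Theorem~\ref{fouriercircle.thm} and the Riemann--Lebesgue lemma.
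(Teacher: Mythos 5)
Your proposal is correct and follows essentially the same route as the paper: the forward implication via $N \subset M^{-1}(M(N))$ and the null-preservation of $M$, and the converse by contraposition, extracting a non-zero Fourier coefficient from Theorem \ref{fouriercircle.thm}, propagating it along $N^k z$ using $T$-invariance and the integrality of $v$ and $z$, and concluding with Riemann--Lebesgue. The only difference is cosmetic: you justify explicitly that $M(N)$ is Lebesgue-null (a detail the paper leaves implicit) and write the invariance chain in Fourier-coefficient notation rather than as integrals.
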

\begin{lemma}\label{fourier}
There exists a finite collection $\mathcal{V} \subset \mathbb{Z}^d\setminus\{0\}$ such that for every $\mu \in \M(K,T)$ there exists a $v \in \mathcal{V}$ such that $P_{v}\mu \not\ll \mathcal{L}^{1}.$ 
\end{lemma}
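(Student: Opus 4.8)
The plan is to realise $\M(K,T)$ as a finite union of open sets, one for each direction $v$ in the final list, using the weak* compactness of $\M(K,T)$. For $v \in \Z^d \setminus \{0\}$ put
$$E_v = \{\mu \in \M(K,T) : P_v\mu \not\ll \calL^1\}.$$
I will show that $\{E_v\}_{v \in \Z^d\setminus\{0\}}$ is an open cover of $\M(K,T)$; extracting a finite subcover $E_{v_1},\dots,E_{v_m}$ and setting $\mathcal{V} = \{v_1,\dots,v_m\}$ then gives exactly the statement, since every $\mu \in \M(K,T)$ lies in some $E_{v_i}$, i.e.\ has $P_{v_i}\mu \not\ll \calL^1$ for some $v_i \in \mathcal{V}$.

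First I would verify that the $E_v$ cover $\M(K,T)$, and this is where Lemmas \ref{coef.lemma} and \ref{eq} enter. Fix $\mu \in \M(K,T)$ and let $c > 0$ and $v \in \Z^d\setminus\{0\}$ be as provided by Lemma \ref{coef.lemma}, so $|\hat\mu(v)| > c$. If instead $P_v\mu \ll \calL^1$, then Lemma \ref{eq} gives $M P_v\mu = \Leb$. But $\widehat{P_v\mu}(1) = \hat\mu(v)$, and, exactly as in the computation in the proof of Lemma \ref{eq}, for integer frequencies $\widehat{MP_v\mu}(a) = \widehat{P_v\mu}(a)$; evaluating at $a = 1$ and using $\widehat{\Leb}(1) = 0$ forces $\hat\mu(v) = 0$, contradicting $|\hat\mu(v)| > c$. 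Hence $\mu \in E_v$.

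Next I would check that each $E_v$ is open, i.e.\ that its complement is closed. By Lemma \ref{eq} together with Theorem \ref{fouriercircle.thm}, the condition $P_v\mu \ll \calL^1$ is equivalent to $M P_v\mu = \Leb$, which in turn is equivalent to $\widehat{MP_v\mu}(a) = 0$ for all $a \in \Z \setminus \{0\}$. Using once more that $\widehat{MP_v\mu}(a) = \widehat{P_v\mu}(a) = \hat\mu(av)$ for $a \in \Z$, we see that the complement of $E_v$ is $\bigcap_{a \in \Z\setminus\{0\}} \{\mu : \hat\mu(av) = 0\}$. Each map $\mu \mapsto \hat\mu(w) = \int e^{-2\pi i x\cdot w}\,d\mu(x)$ is continuous for the weak* topology on probability measures on $[0,1]^d$, because $x \mapsto e^{-2\pi i x\cdot w}$ is a bounded continuous function there; so each set $\{\mu : \hat\mu(av) = 0\}$ is closed, and therefore so is the complement of $E_v$.

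Finally, since $\M(K,T)$ is compact for the weak* topology (by the cited fact of Einsiedler--Ward), the open cover $\{E_v\}_{v\in \Z^d\setminus\{0\}}$ has a finite subcover, and the corresponding set of directions is the desired $\mathcal{V}$. I do not expect any individual step to be genuinely difficult once the preceding three lemmas are available; the only point that needs care is the passage from ``each $\mu$ admits some bad direction $v$'' (which is immediate from Lemmas \ref{coef.lemma} and \ref{eq}) to a single finite list of directions valid for all $\mu$ simultaneously, and this is precisely what the compactness of $\M(K,T)$, combined with the openness of the sets $E_v$, delivers.
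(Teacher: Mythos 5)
Your proof is correct and rests on exactly the same ingredients as the paper's --- Lemma \ref{coef.lemma}, the identity $\widehat{MP_v\mu}(a)=\widehat{P_v\mu}(a)=\hat\mu(av)$ for integer $a$ from the proof of Lemma \ref{eq}, and the weak* compactness of $\M(K,T)$ --- differing only in how compactness is invoked: you extract a finite subcover from the open sets $E_v=\{\mu\in\M(K,T): P_v\mu\not\ll\calL^1\}$, whereas the paper runs a sequential argument by contradiction to produce a finite set of frequencies at which every invariant measure has a nonvanishing Fourier coefficient, and then converts nonvanishing into failure of absolute continuity. The one step your route adds --- the verification that each $E_v$ is open, by writing its complement as $\bigcap_{a\in\Z\setminus\{0\}}\{\mu:\hat\mu(av)=0\}$ and using weak* continuity of $\mu\mapsto\hat\mu(w)$ --- is carried out correctly, so both arguments are sound and essentially equivalent.
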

\begin{proof}
We first claim that there exists a finite $\calV \subset \Z^d\setminus\{0\}$ such that for any $\mu \in \calM (K,T)$ we may find a $v \in \V$ such that $\hat\mu(v) \neq 0.$ Suppose this is false. Then we can find a sequence $(\mu_n)_{n\in\N}$ in $\calM(K,T)$ and a sequence $(v_n)_{n \in \N}$ in $\Z^d\setminus\{0\}$ with $\hat\mu_n(v_n) \neq 0,$ so that $|v_n| \rightarrow \infty$ and each $v_n$ is chosen to be of minimal length so $\hat\mu(v_n) \neq 0.$ By passing to a subsequence we can assume that $\mu_n$ converges weak* to $\mu \in \calM(K,T).$ Now let $v \in \Z^d\setminus\{0\}$ so that $|\hat\mu(v)| > c,$ where $c$ is as in Lemma \ref{coef.lemma}. We know from Theorem \ref{fouriercircle.thm} that $\hat\mu_n(v) \rightarrow \hat\mu(v).$ Therefore for all $n$ large enough we have that $\hat\mu_n(v) \neq 0$ which contradicts the assumptions. 

\indent Now let $\mu \in \calM(K,T).$ For this set $\V$ we can find a $v \in \V$ such that $\hat\mu(v) \neq 0.$ By a simple observation we see that $\widehat{MP_v\mu(1)} \neq 0,$ and so by the argument in the final sentence of the previous lemma we see that $P_v\mu \not\ll \calL^1.$
\end{proof}
\section{The Sierpi\'nski carpet is tube-null}
Recall that we wish to prove that the attractor $K$ of 
$$\F = \Big\{f_i(x) = \frac{x}{N} + \frac{i}{N}\Big\}_{i \in \Gamma},$$ is tube-null, where $\Gamma$ is a proper subset of $\{0,\dots,N-1\}^d.$ We wish to look at the projection of this IFS in rational directions.
\begin{definition}
    For $v \in \Z^{d}\setminus\{0\}$ we define the \textit{projected IFS} of $\F$ in \textit{direction} $v$ by 
    $$\calF_v = \Big\{f^v_i(x) = \frac{x}{N} + \frac{i\cdot v}{N}\Big\}_{i \in \Gamma}.$$
    \end{definition}
    By setting $\Gamma_v = \Gamma\cdot v$ we may rewrite the above as 
        $$\calF_v = \Big\{f_i^v(x) = \frac{x}{N} + \frac{i}{N}\Big\}_{i \in \Gamma_v}.$$
        Define the map $\Pi_v: \Gamma \rightarrow \Gamma_v$ by $i \mapsto i\cdot v.$ For any $\text{p} \in \mathcal{M}_1(\Gamma_v)$ we can define the push-forward measure $M\text{p} \in \mathcal{M}_1(M(\Gamma_v)).$ For notational simplicity define 
$$\Sigma = \{0,1,\dots ,N-1\}.$$
Note that $M_N(\Ga_v)\subset \Sigma.$ Therefore we suppose that $M_N\text{p}\in \mathcal{M}_1(\Sigma)$ by the inclusion map and by setting $M_N\text{p}(i) = 0$ to any $i \in \Sigma$ where $i \not\in M(\Gamma_v) .$ So $M_N\text{p}$ is an $N$-tuple $(p_1,\dots ,p_N)$ where $M_N\text{p}(i/N) = p_{i+1}$ for $i = 0,\dots , N-1.$ 

\indent Consider the symbol space $\Ga^n_v.$ 
\begin{definition}
For $\om, \eta \in \Gamma^n_v$ we write $\om \sim \eta$ if $f^v_\om(0) = f^v_\eta(0).$ 
\end{definition}We can then redefine $\Ga^n_v$ by choosing, in a convenient manner, an element from each equivalence class. By doing this, we have removed the exact overlaps. Without loss of generality, by means of a translation, we assume that each element of $\Ga_v$ is positive.
\begin{lemma}\label{lem.equiv}
There exists $L \in \N$ so that for all $n \in \N$ and all  $\eta \in \Ga^n_v$ we may find $\om \in \{0,\dots,L\} \times \Sigma^{n-1}$ so that $\om \sim \eta. $
\end{lemma}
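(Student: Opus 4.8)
The plan is to analyze the map $f^v_\om(0)$ for a word $\om = (\om_1,\dots,\om_n) \in \Ga^n_v$ explicitly and show that the leading digit $\om_1$ is the only one that can be large; all subsequent digits already live in $\Sigma = \{0,\dots,N-1\}$ after we account for overlaps. Concretely, since $f^v_i(x) = x/N + i/N$ with $i \in \Ga_v$, iterating gives the base-$N$ type expansion
$$f^v_\om(0) = \sum_{k=1}^n \frac{\om_k}{N^{n-k+1}} = \frac{1}{N^n}\Big(\om_1 N^{n-1} + \om_2 N^{n-2} + \dots + \om_n\Big).$$
Because $\Ga \subset \{0,\dots,N-1\}^d$ and $v \in \Z^d$ is fixed, each digit satisfies $0 \le \om_k \le (N-1)\|v\|_1 =: C$ (using our normalisation that $\Ga_v$ is positive). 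So the integer $S := \om_1 N^{n-1} + \dots + \om_n$ lies between $0$ and roughly $C N^{n-1}\cdot\frac{N}{N-1}$, i.e. $S$ has at most $n-1$ base-$N$ digits beyond a bounded leading block. The idea is to rewrite $S$ in base $N$: $S = \sum_{k=1}^n \eta_k N^{n-k}$ with $\eta_2,\dots,\eta_n \in \Sigma$ and $\eta_1 = \lfloor S/N^{n-1}\rfloor \le C\cdot\frac{N}{N-1} + 1 \le L$ for an absolute constant $L = L(N,v)$ independent of $n$. Then setting $\om' := (\eta_1,\dots,\eta_n)$ we get $f^v_{\om'}(0) = S/N^n = f^v_\om(0)$, hence $\om' \sim \om$, and $\om' \in \{0,\dots,L\}\times\Sigma^{n-1}$ as required.

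The one point needing a little care is that $\om'$ as just constructed is a word over the *full* digit set $\{0,\dots,L\}\times\Sigma^{n-1}$, which is not literally a word in $\Ga^n_v$ — but that is exactly what the statement asks for: $\om$ ranges over $\Ga^n_v$ and $\om'$ ranges over $\{0,\dots,L\}\times\Sigma^{n-1}$, with equivalence $\om'\sim\om$ defined via the condition $f^v_{\om'}(0) = f^v_\om(0)$, where $f^v_i(x) = x/N + i/N$ makes sense for any nonnegative integer digit $i$. So the base-$N$ renormalisation of the integer $S$ is all that is needed. I would also record the trivial base case and note that the carry propagation when converting $S$ to base $N$ only ever *increases* the leading coefficient by a bounded amount, since the total "excess" contributed by digits $\om_2,\dots,\om_n$ being as large as $C$ rather than $N-1$ is at most $C\sum_{j\ge 1} N^{-j} \le C/(N-1)$ after dividing by $N^{n-1}$.

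The main (minor) obstacle is bookkeeping: making sure the bound on $\eta_1$ is genuinely uniform in $n$. This follows because $S/N^{n-1} = \om_1 + \sum_{k\ge 2}\om_k N^{-(k-1)} \le C + C/(N-1)$, so $\eta_1 = \lfloor S/N^{n-1}\rfloor \le \lfloor C + C/(N-1)\rfloor =: L$, a constant depending only on $N$ and $\|v\|_1$ but not on $n$. Taking the maximum of these $L$'s over the finite collection $\calV$ from Lemma \ref{fourier} (when this lemma is later applied) gives a single $L$ that works for all the directions we use. I expect no serious difficulty; this is a discretised restatement of the familiar fact that projecting a homogeneous self-similar set in a rational direction yields a set whose coding, after removing exact overlaps, differs from a standard base-$N$ coding only in a bounded initial segment.
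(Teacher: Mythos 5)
Your proof is correct and follows essentially the same route as the paper's: bound $f^v_\om(0)$ by a constant independent of $n$, observe it is an integer multiple of $N^{-n}$, and re-expand that integer in base $N$ so that all digits except one distinguished (bounded) leading digit lie in $\Sigma$. The only blemish is a cosmetic indexing slip in your first display (the two expressions for $f^v_\om(0)$ differ by reversing the word, i.e.\ by which end of $\om$ is the most significant digit), but the paper itself is loose about the composition order, and your uniform bound $\eta_1 \le C + C/(N-1)$ is exactly the point.
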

\begin{proof}
Set $L_1 = \max \Ga_v.$ For all $\eta \in \Ga^n_v$ we have 
\begin{align}
    f_\eta^v(0) &= \sum_{i=1}^n \eta_i / N^{n-i+1}\\
    &\leq L_1 \sum_{i=1}^\infty 1 / N^{i}\\
    &= L_1 \frac{1}{N-1}\\
    & \leq L_1.
\end{align}
 Set $L = L_1 N.$ Therefore, using the definition of the $f^v_j,$ for $\eta \in \Ga^n_v$ we have
\begin{equation}
    f_\eta^v(0) \in \{0, N^{-n},\dots, LN^{-n}\}.
\end{equation}
Recall that 
\begin{equation}\label{eq.funtionin}
    f_\eta^v(0) = f_{\eta_n}^v \circ \cdots \circ f^v_{\eta_1}(0),
\end{equation}
where $f_{\eta_j}(x) = x/N + \eta_j/N.$ We now choose to represent $\eta$ as follows: Find an integer $0 \leq j \leq 2L_1$ so that $f_\eta^v(0) \in [j,j+1).$ Set $\eta_1 = jN.$ We now know that 
\begin{equation}
    f_\eta^v(0) \in \{j, j + N^{-n}, \dots, j + 1 - N^{-n}\}.
\end{equation}
In the usual way we may find $\eta_2,\dots, \eta_n \in \Sigma$ so that 
\begin{equation}
        f_\eta^v(0) = f_{\eta_n}^v \circ \cdots \circ f^v_{\eta_1}(0).
\end{equation}
\end{proof} 
\indent The measures we will consider on $K$ will be the \text{self-similar} measures coming from probability vectors on $\Gamma$ as stated in Theorem \ref{selfsimilar.thm}. In fact, these measures are fixed points under an appropriate contraction.
\begin{thm}[Theorem 2.8, \cite{fal}]\label{selfsimunique.thm}
    Consider an IFS $\F = \{f_i\}_{i \in \Gamma}$ and place a probability vector $\p = \{p_i\}_{i \in \Gamma}$ on $\Gamma.$ Let $\mu$ be the unique self-similar measure coming from $\calF$ and $\mathrm{p}$ as defined in Theorem \ref{selfsimilar.thm}. Let $\calM$ be the class of Borel probability measures on $\R^d$ with bounded support. Endow $\calM$ with the metric $d,$ defined by
    $$d(\nu_1,\nu_2) = \sup\bigg\{\bigg| \int f d\nu_1 - \int f d\nu_2\bigg| : \mathrm{ Lip }~f \leq 1\bigg\},$$
    where $\mathrm{Lip}$ denotes the Lipschitz constant. Define the map $\varphi: \calM \rightarrow \calM$ by
    $$\varphi(\nu) = \sum_{i \in \Gamma} p_if_i \nu.$$ Then for any measure $\nu \in \calM$ we have $\varphi^n(\nu) \rightarrow \mu.$ That is, the measure $\mu$ is unique.
\end{thm}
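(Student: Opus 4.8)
The plan is to read the statement as an instance of the Banach fixed point theorem for the map $\varphi$ on a suitable complete subspace of $(\calM, d)$. First I would fix a closed ball $D \subset \R^d$ with radius large enough that $f_i(D) \subseteq D$ for every $i \in \Gamma$; this is possible because each $f_i$ is a contraction, so its image of a sufficiently large ball lies inside that ball. Write $\calM_D$ for the subfamily of $\calM$ consisting of measures supported in $D$. Since $\spt(\varphi(\nu)) \subseteq \bigcup_{i \in \Gamma} f_i(\spt \nu)$, the map $\varphi$ sends $\calM_D$ into itself, and it is on $(\calM_D, d)$ that the fixed point argument will run.

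The first substantive step is to show that $(\calM_D, d)$ is a complete metric space. Because $D$ is compact, $d$ metrises the weak* topology on $\calM_D$: Lipschitz functions are dense in $C(D)$, so $d$-convergence forces convergence of the integrals against every continuous test function; conversely, the family of $1$-Lipschitz functions on $D$ normalised to vanish at one chosen point of $D$ is uniformly bounded and equicontinuous, hence relatively compact in $C(D)$ by Arzel\`a--Ascoli, so weak* convergence of measures implies that the integrals converge uniformly over this family, that is, in $d$. A $d$-Cauchy sequence in $\calM_D$ is therefore weak*-Cauchy, and since the space of probability measures on the compact set $D$ is weak*-compact, it has a limit in $\calM_D$, which is also its $d$-limit. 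I expect this to be the most delicate point: the subtlety is precisely that, on all of $\calM$, a $d$-Cauchy sequence could in principle leak mass to infinity or have supports growing without bound, and confining attention to the invariant ball $D$ is exactly what rules this out.

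Next I would verify that $\varphi$ is a contraction on $(\calM_D, d)$. Let $r_i < 1$ denote the contraction ratio of $f_i$ and set $r = \max_{i \in \Gamma} r_i < 1$. For $g$ with $\mathrm{Lip}(g) \le 1$ and $\nu_1, \nu_2 \in \calM_D$, the identity $\int g\, d\varphi(\nu_j) = \sum_{i \in \Gamma} p_i \int (g \circ f_i)\, d\nu_j$ together with the bound $\mathrm{Lip}(g \circ f_i) \le r_i$ gives, after taking the supremum over $g$,
$$d(\varphi(\nu_1), \varphi(\nu_2)) \le \sum_{i \in \Gamma} p_i r_i \, d(\nu_1, \nu_2) \le r\, d(\nu_1, \nu_2).$$
By the Banach fixed point theorem, $\varphi$ has a unique fixed point $\mu \in \calM_D$ and $\varphi^n(\nu) \to \mu$ for every $\nu \in \calM_D$.

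It remains to upgrade the convergence to arbitrary $\nu \in \calM$ and to identify $\mu$. For any $\nu$ with bounded support, $\spt(\varphi^k(\nu))$ is contained in the union of the images of $\spt \nu$ under the $k$-fold compositions of the $f_i$; this set has diameter of order $r^k$ and converges to the attractor of $\F$; hence, after enlarging $D$ at the outset if necessary so that it contains a neighbourhood of the attractor, we have $\varphi^k(\nu) \in \calM_D$ for all large $k$, and applying the previous step to $\varphi^k(\nu)$ yields $\varphi^n(\nu) \to \mu$. Finally, the fixed-point identity $\mu = \varphi(\mu) = \sum_{i \in \Gamma} p_i f_i \mu$ is exactly the equation characterising the self-similar measure of Theorem \ref{selfsimilar.thm}; as that equation has a unique solution, the self-similar measure must coincide with $\mu$, which is the uniqueness assertion.
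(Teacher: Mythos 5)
The paper does not actually prove this statement---it is quoted from Falconer's book---and your argument is precisely the standard contraction-mapping proof of that reference: an invariant closed ball $D$, completeness of the metric $d$ on probability measures supported in the compact set $D$, and the contraction estimate $d(\varphi(\nu_1),\varphi(\nu_2)) \le \sum_i p_i r_i\, d(\nu_1,\nu_2) \le r\, d(\nu_1,\nu_2)$. Your proof is correct; the only slip is the claim that $\spt(\varphi^k(\nu))$ has diameter of order $r^k$ (each of the $|\Gamma|^k$ images $f_{\omega_1}\circ\cdots\circ f_{\omega_k}(\spt\nu)$ does, not their union), but the clause you actually use---that this union converges to the attractor and hence eventually lies in the enlarged $D$---is right, and one could avoid the issue entirely by choosing $D$ invariant and large enough to contain $\spt\nu$ from the outset.
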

\begin{lemma} \label{ent}
There exists a constant $\delta > 0,$ so that for any $\rm{p} \in \mathcal{M}_1(\Gamma)$ there exists a direction, $v \in \V,$ such that the measure $M\Pi_{v}\rm{p}$ on $\Sigma$ satisfies $\E(M\Pi_{v} \rm{p}) \leq 1 - \delta.$ 
\begin{proof}
Let $\mu$ be the self-similar measure coming from $\mathbf{p}.$ By Lemma \ref{fourier} there exists a $v \in \V$ such that $P_v\mu \not\ll \Le^1.$ Suppose that $\E(M\Pi_v\mathbf{p}) = 1.$ Therefore, by a basic property of Shannon entropy, we have $M\Pi_v\p = (1/N,\dots ,1/N).$ Write $\Pi_v\p = \{p_i\}_{i \in \Ga_v}.$ 
\begin{claim}
If $\nu$ is a probability measure on $\R$ with $M\nu = \Leb,$ then $M\varphi(\nu) = \Leb.$ 
\begin{proof}[Proof of claim]
Let $0 \leq k \leq N^n-1$ be an integer and consider the interval $[k/N^n, (k+1)/N^n).$ We then have 
\begin{align*}
M\varphi(\nu)\big([k/N^n, (k+1)/N^n)\big) &= M\bigg(\sum_{i \in \Ga_v}p_if_i\nu\bigg)\big([k/N^n, (k+1)/N^n)\big)\\
&= \sum_{i \in \Ga_v}p_i(M \circ f_i)\nu\big([k/N^n, (k+1)/N^n)\big)\\
&= \frac{1}{N}\sum_{i=0}^{N-1}\nu\big([k/N^{n-1} - i , (k+1)/N^{n-1}-i)\big)\\
&\leq \frac{1}{N} M\nu \big([k/N^{n-1}, (k+1)/N^{n-1})\big)\\
&= N^{-n}.
\end{align*}
Further if $M\varphi(\nu)[k/N^n, (k+1)/N^n)  < N^{-n}$ then there exists an integer $0 \leq l \leq N^{n-1}$ with $M\varphi(\nu)[l/N^n, (l+1)/N^n) > N^{-n}$ which contradicts the above. Therefore, we have 
$$M\varphi(\nu)[k/N^n, (k+1)/N^n)  = N^{-n}$$ 
and the claim follows from Hahn-Kolmogorov.
\end{proof}
\end{claim}

\indent Now let $\nu$ be a probability measure such that $M\nu = \Leb.$ (For example $\nu = \Leb.)$ We know by Theorem \ref{selfsimunique.thm} that $\varphi^k(\nu) \rightarrow P_v\mu,$ but since $M\varphi(\nu) = \Leb$ it follows that $MP_v\mu = \Leb$ by the continuity of $M.$ Therefore $P_v\mu \ll \Le^1$ which is a contradiction. Thus $\E(M\Pi_v\p) < 1.$  

\indent Now suppose there does not exist $\delta > 0$ as in the statement of Lemma \ref{ent}. Then there exists a sequence $\{\p_k\}_{k \in \N}$ in $\M_1(\Ga)$ with 
$$\lim_{k\rightarrow \infty}\E(M\Pi_v\p_k) = \E(M\Pi_v\lim_{k\rightarrow \infty}\p_k) \rightarrow 1$$ 
for all $v \in \V,$ with the equality following from the continuity of the maps $\E, M,$ and $\Pi_v.$ By compactness of $\M_1(\Ga)$ we can pass to a subsequence and find a $\p \in \ M_1(\Ga)$ such that $\p_k \rightarrow \p$ as $k \rightarrow \infty.$ Therefore $\E(M\Pi_v\p) = 1$ for all $v \in \V$ which contradicts the above.  
\end{proof} 
\end{lemma}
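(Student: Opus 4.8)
The plan is to split the argument into two parts: (a) for each \emph{fixed} $\mathbf p\in\M_1(\Ga)$, exhibit \emph{some} direction $v\in\V$ with $\E(M\Pi_v\mathbf p)<1$ strictly; and (b) upgrade this strict inequality to a uniform gap $1-\delta$ by a compactness argument on the simplex $\M_1(\Ga)$. For part (a) I would argue by contradiction via Lemma \ref{fourier}. Let $\mu$ be the self-similar measure on $K$ determined by $\mathbf p$ through Theorem \ref{selfsimilar.thm}. Since every $f_i(x)=x/N+i/N$ has $i\in\Z^d$, one checks $T\circ f_i=\mathrm{id}$ on $[0,1)^d$, so $T\mu=\sum_{i\in\Ga}p_i(Tf_i)\mu=\mu$ and hence $\mu\in\M(K,T)$. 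Lemma \ref{fourier} then supplies a $v\in\V$ with $P_v\mu\not\ll\Le^1$; fix this $v$, and it suffices to show $\E(M\Pi_v\mathbf p)<1$.

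Suppose instead that $\E(M\Pi_v\mathbf p)=1$. As $M\Pi_v\mathbf p$ is a probability vector on the $N$-element set $\Sigma$, maximality of the entropy forces $M\Pi_v\mathbf p=(1/N,\dots,1/N)$, i.e. the digits of $\Ga_v$ are equidistributed modulo $N$ under the weights $\Pi_v\mathbf p$. Let $\varphi$ be the operator of Theorem \ref{selfsimunique.thm} attached to the projected IFS $\calF_v$ with weights $\Pi_v\mathbf p$, namely $\varphi(\nu)=\sum_{j\in\Ga_v}(\Pi_v\mathbf p)_j f^v_j\nu$. Because $P_v$ intertwines $\F$ with $\calF_v$ (explicitly $P_v\circ f_i=f^v_{i\cdot v}\circ P_v$), the measure $P_v\mu$ is the self-similar measure of $\calF_v$, hence the fixed point of $\varphi$, and $\varphi^k(\nu)\to P_v\mu$ for every $\nu$ by Theorem \ref{selfsimunique.thm}.

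The crux is the claim: \emph{if $\nu$ is a probability measure on $\R$ with $M\nu=\Leb$, then $M\varphi(\nu)=\Leb$.} To prove it I would evaluate $M\varphi(\nu)$ on an $N$-adic interval $[kN^{-n},(k+1)N^{-n})$, grouping the maps $f^v_j$ of $\calF_v$ by the residue $j\bmod N$; each residue class carries total weight exactly $1/N$, and using $M\nu=\Leb$ a short computation bounds the resulting sum by $N^{-n}$, so $M\varphi(\nu)[kN^{-n},(k+1)N^{-n})\le N^{-n}$ for every $k$. Since $M\varphi(\nu)$ is a probability measure and the $N^{n}$ such intervals partition $[0,1)$, equality must hold for all $k$, whence $M\varphi(\nu)=\Leb$ by Hahn--Kolmogorov. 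Iterating from $\nu=\Leb$ gives $M\varphi^k(\Leb)=\Leb$ for all $k$; passing $M$ through the limit $\varphi^k(\Leb)\to P_v\mu$ yields $MP_v\mu=\Leb$, and then Lemma \ref{eq} gives $P_v\mu\ll\Le^1$, contradicting the choice of $v$. Hence $\E(M\Pi_v\mathbf p)<1$, completing part (a). For part (b): if no $\delta>0$ worked, then for each $k\in\N$ there is $\mathbf p_k\in\M_1(\Ga)$ with $\E(M\Pi_v\mathbf p_k)>1-1/k$ for \emph{every} $v\in\V$; by compactness of $\M_1(\Ga)$ pass to a subsequential limit $\mathbf p_k\to\mathbf p$, and since $\mathbf q\mapsto\Pi_v\mathbf q$, the mod-$N$ push-forward and the entropy $\E$ are continuous on these finite-dimensional simplices, $\E(M\Pi_v\mathbf p)=1$ for all $v\in\V$, contradicting part (a) applied to $\mathbf p$.

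I expect the main obstacle to be the claim in part (a) together with the identification of $P_v\mu$ as the $\varphi$-fixed point: this is the single place where the combinatorial hypothesis (equidistribution of the projected digits modulo $N$) is converted into the analytic conclusion ($MP_v\mu=\Le^1$) needed to contradict Lemma \ref{fourier}. Everything around it is soft: the compactness/continuity step in (b) is routine, and the only minor delicacy is that the mod-$1$ map $M$, which is discontinuous at the integers, can be passed through the limit $\varphi^k(\Leb)\to P_v\mu$ because $P_v\mu$ is non-atomic (being self-similar for an IFS with at least two distinct maps), the degenerate cases where $\Ga_v$ collapses to a point being immediate since then $\E(M\Pi_v\mathbf p)=0$.
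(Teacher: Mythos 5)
Your proposal is correct and follows essentially the same route as the paper: Lemma \ref{fourier} supplies the bad direction $v$, the assumption $\E(M\Pi_v\mathbf p)=1$ forces the uniform distribution, the key claim that $\varphi$ preserves the property $M\nu=\Leb$ is proved by the same $N$-adic interval computation, iteration to the fixed point $P_v\mu$ plus Lemma \ref{eq} gives the contradiction, and the uniform $\delta$ comes from the same compactness argument on $\M_1(\Ga)$. If anything, you are slightly more careful than the paper in verifying $T$-invariance of $\mu$ and in justifying the passage of the discontinuous map $M$ through the weak* limit via non-atomicity of $P_v\mu$.
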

\indent We now pause to give a few more definitions and results we shall need before proceeding. Fix $v \in \V$ and $n \in \N.$
\begin{definition}[Definition 2.1.1, Definition 2.1.4, \cite{dem}]
For any $\om = (\om_1,\dots ,\om_n) \in \Gamma^n_v$ define  
$$L_n^\om = \frac{1}{n}\sum_{i=1}^n\delta_{\om_i}.$$
Note that $L_n^\om \in \M_1(\Gamma_v).$ Define the \textit{type class} of $\nu \in \M_1(\Ga)$ by 
$$T_n(\nu) = \{\om \in \Ga^n_v : L_n^\om = \nu\}.$$
Denote $\mathcal{L}_n$ the set of all possible types of sequences of length $n$ in $\Ga,$ i.e
$$\mathcal{L}_n = \{ \nu \in \M_1(\Ga) : \nu = L_n^\om \ \text{for some} \ \om \in \Ga^n_v\}.$$
\end{definition} 
\begin{lemma}[Lemma 2.1.2, \cite{dem}]\label{lem.upperboundent}
We have
\begin{align*}
|\mathcal{L}_n| \leq (n+1)^{|\Ga|}.
\end{align*}
\end{lemma}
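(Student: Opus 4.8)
The plan is to use the classical ``method of types'' counting argument. A type $\nu \in \mathcal{L}_n$ arises as
$$\nu = L_n^\om = \frac{1}{n}\sum_{i=1}^n \delta_{\om_i}$$
for some word $\om \in \Ga_v^n$, and I would first observe that such a measure is completely determined by the integer multiplicities with which each letter of the alphabet appears in $\om$. Concretely, for $a \in \Ga_v$ set $m_a(\om) = \#\{1 \le i \le n : \om_i = a\}$; then $\nu(\{a\}) = m_a(\om)/n$, so the vector $(m_a)_{a \in \Ga_v}$ recovers $\nu$ entirely. In particular, if $L_n^\om = L_n^{\om'}$ then $(m_a(\om))_a = (m_a(\om'))_a$, so the type really is a function of this multiplicity vector alone.

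Next I would note that each coordinate $m_a$ takes values in $\{0,1,\dots,n\}$, a set of cardinality $n+1$. Hence the assignment $\nu \mapsto (m_a)_{a \in \Ga_v}$ defines an injection from $\mathcal{L}_n$ into $\{0,1,\dots,n\}^{\Ga_v}$, and therefore $|\mathcal{L}_n| \le (n+1)^{|\Ga_v|}$. Since the map $\Pi_v : \Ga \to \Ga_v$ is surjective we have $|\Ga_v| \le |\Ga|$, and the stated bound $|\mathcal{L}_n| \le (n+1)^{|\Ga|}$ follows at once. Equivalently, one may extend every type $L_n^\om$ by zero on the letters of $\Ga$ not lying in $\Ga_v$ (which is exactly the convention $\mathcal{L}_n \subset \mathcal{M}_1(\Ga)$ used in the definition), so that the multiplicity map lands directly in $\{0,1,\dots,n\}^{\Ga}$.

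There is essentially no obstacle here: the argument is purely combinatorial and the only point requiring a line of care is the bookkeeping between the reduced alphabet $\Ga_v$, on which the empirical measures $L_n^\om$ genuinely live, and the original alphabet $\Ga$ appearing in the statement, which is handled by the surjectivity of $\Pi_v$. I will not bother sharpening the estimate; the constraint $\sum_{a} m_a = n$ would upgrade the bound to a binomial coefficient, but the crude form $(n+1)^{|\Ga|}$ is all that is needed in the sequel.
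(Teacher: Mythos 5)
Your argument is correct and is precisely the standard method-of-types count from the cited reference (Dembo--Zeitouni, Lemma 2.1.2), which the paper quotes without proof: a type is determined by its multiplicity vector, each coordinate of which has at most $n+1$ possible values. Your extra care in reconciling the alphabet $\Ga_v$ (on which the empirical measures actually live) with the $|\Ga|$ in the stated exponent, via surjectivity of $\Pi_v$, correctly patches a small notational looseness in the paper's definition of $\mathcal{L}_n$.
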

\begin{lemma}[Lemma. 2.1.8, \cite{dem}]\label{lem.typeclassbound}
For every $\nu \in \mathcal{L}_n$ we have 
\begin{align*}
\frac{1}{(n+1)^{|\Gamma_v|}}e^{n\E(\nu)} \leq |T_n(\nu)| \leq 
e^{n\E(\nu)}.
\end{align*}
\end{lemma}
For the rest of this chapter, for two positive real numbers $x,y$ we say that $x \lesssim y$ if there exists a constant $C > 0$ that does not depend on $n$ so that $x \leq Cy.$
\begin{lemma}
Let $\delta$ be as in Lemma \ref{ent}. Define
$$A = \{ \nu \in \M_1(\Sigma) : \E(\nu) \leq 1-\delta\}.$$ 
$$S_n = \{\omega \in \Sigma^n : L_n^\om \in A \}.$$
Then 
$$|S_n| \lesssim n^{O(1)}N^{n(1-\delta)}.$$
\begin{proof}
We have
\begin{align*}
|S_n| &= \Big| \bigcup_{\nu \in \mathcal{L}_n}(S_n \cap T_n(\nu))\Big| \\
&= \sum_{\nu \in \mathcal{L}_n}|S_n \cap T_n(\nu)| \\
&= \sum_{\nu \in \mathcal{L}_n \cap A}|T_n(\nu)| \\
&\lesssim  \sum_{\nu \in \mathcal{L}_n \cap A}N^{nH(\nu)} \text{ by Lemma \ref{lem.typeclassbound}}\\
&\leq \max_{\nu \in \mathcal{L}_n \cap A}(n+1)^{|\Gamma_v|} N^{n\E(\nu)} \text{ by Lemma \ref{lem.upperboundent}}\\
&\leq (n+1)^{|\Gamma_v|}N^{n(1-\delta)}.
\end{align*}
\end{proof}	
\end{lemma}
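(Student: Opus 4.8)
The plan is to run the standard \emph{method of types} (cf.\ \cite{dem}): partition $S_n$ according to the type $L_n^\om$ of a word, observe that membership in $S_n$ depends only on this type, and then sum the sizes of the admissible type classes using the counting estimates already recorded.

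First I would record that every $\om \in \Sigma^n$ has a well-defined type $L_n^\om \in \mathcal{L}_n$ and that the classes $\{T_n(\nu)\}_{\nu \in \mathcal{L}_n}$ partition $\Sigma^n$. Since $\om \in S_n$ is, by definition, precisely the condition $L_n^\om \in A$, for each $\nu \in \mathcal{L}_n$ the set $S_n \cap T_n(\nu)$ equals $T_n(\nu)$ when $\nu \in A$ and is empty otherwise. Hence
\[
|S_n| \;=\; \sum_{\nu \in \mathcal{L}_n \cap A} |T_n(\nu)|.
\]

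Next I would estimate each term. Lemma \ref{lem.typeclassbound} gives $|T_n(\nu)| \lesssim N^{n\E(\nu)}$, where the entropy $\E$ is normalised so that the uniform measure on the $N$-letter alphabet $\Sigma$ has $\E = 1$ --- the normalisation already in force through Lemma \ref{ent}. For $\nu \in A$ we have $\E(\nu) \le 1-\delta$, so $|T_n(\nu)| \lesssim N^{n(1-\delta)}$ uniformly in $\nu$. Feeding in the bound $|\mathcal{L}_n| \le (n+1)^{|\Gamma|}$ on the number of types from Lemma \ref{lem.upperboundent} gives
\[
|S_n| \;\le\; |\mathcal{L}_n \cap A| \cdot \max_{\nu \in \mathcal{L}_n \cap A} |T_n(\nu)| \;\lesssim\; (n+1)^{|\Gamma|}\,N^{n(1-\delta)} \;=\; n^{O(1)}\,N^{n(1-\delta)},
\]
which is the claim.

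I do not anticipate a real obstacle: once Lemmas \ref{lem.upperboundent} and \ref{lem.typeclassbound} are available the argument is purely combinatorial, with the polynomial factor $n^{O(1)}$ comfortably absorbing both the number of types and the lower-order slack in the type-class count. The one point meriting a little care is bookkeeping the base of the logarithm: the exponential growth rate of a type class must be measured in the same base as the entropy threshold defining $A$, namely base $N$; with any other choice one picks up only a harmless multiplicative constant, but the clean exponent $1-\delta$ would be obscured.
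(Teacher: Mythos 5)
Your proposal is correct and is essentially identical to the paper's argument: partition $S_n$ into type classes, note that membership in $S_n$ depends only on the type, and combine the type-class size bound with the polynomial bound on the number of types. Your remark about normalising the entropy to base $N$ (so that the type-class count reads $N^{n\E(\nu)}$ rather than $e^{n\E(\nu)}$) is a sensible piece of bookkeeping that the paper carries out implicitly.
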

\begin{lemma}\label{count}
Let $\delta$ be as Lemma \ref{ent}. Define
$$A = \{ \nu \in \M_1(\Sigma) : \E(\nu) \leq 1-\delta\}$$
and 
$$T_n^v = \{\om \in \Ga_v^n: ML_n^w \in A\}.$$
Then 
    $$|T_n| \lesssim n^{O(1)}N^{n(1-\delta)}.$$
\begin{proof}
Recall that 
$$\Ga_v^n \subset \{0,\dots M\} \times \Sigma^{n-1},$$
where $M$ is as in Lemma \ref{lem.equiv}.
Therefore 
$$M_N\Ga_v^n \subset M_N(m) \times \Sigma^{n-1} \subset \Sigma^n,$$
and so,
\begin{align*}
M_N(T_n^v) &= M_N(\{\om \in \Ga_v^n: ML_n^w \in A\})\\
&= \{M_N(\om) \in \Ga_v^n: ML_n^w \in A\}\\
&\subset \{\om \in \Sigma^n: L_n^w \in A\}\\
&= S_n.
\end{align*} 
Therefore 
$$T_n^v \subset M_M^{-1}(S_n),$$
and since the size of a pre-image of $M$ is at most a constant, the result follows. 
\end{proof}
\end{lemma}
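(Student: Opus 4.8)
The plan is to deduce the bound directly from the preceding lemma, which estimates $|S_n|$, by pushing the set $T_n^v$ forward under coordinatewise reduction modulo $N$ and checking that this map is boundedly many-to-one, uniformly in $n$. The ingredient that makes the uniformity work is Lemma \ref{lem.equiv}: it lets us represent each equivalence class in $\Ga_v^n$ by a word whose first letter lies in the fixed finite range $\{0,\dots,L\}$ and whose remaining $n-1$ letters lie in $\Sigma$.

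First I would fix $v\in\V$ and $n$, take the representatives of $\Ga_v^n$ inside $\{0,\dots,L\}\times\Sigma^{n-1}$ as furnished by Lemma \ref{lem.equiv}, and let $M_N$ act letterwise on such words. Since $M_N$ is the identity on $\Sigma$, this map --- call it $r$ --- changes only the first letter and sends $\{0,\dots,L\}\times\Sigma^{n-1}$ into $\Sigma^n$. Over each $j\in\Sigma$ the fibre of $M_N$ restricted to $\{0,\dots,L\}$ has at most $\lceil(L+1)/N\rceil$ points, so every fibre of $r$ has size at most $C:=\lceil(L+1)/N\rceil$, a constant depending on $\Ga$ and $v$ but not on $n$. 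Next I would record the elementary commutation identity $ML_n^\om=L_n^{r(\om)}$: both measures assign to each $j\in\Sigma$ the frequency with which a letter of $\om$ lies in $M_N^{-1}(j)$, i.e.\ forming the empirical measure and reducing modulo $N$ commute. Hence $\om\in T_n^v$ precisely when $L_n^{r(\om)}\in A$, that is, precisely when $r(\om)\in S_n$; so $r(T_n^v)\subseteq S_n$.

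Putting the two facts together gives
$$|T_n^v|\;\le\;C\,\big|r(T_n^v)\big|\;\le\;C\,|S_n|\;\lesssim\;n^{O(1)}N^{n(1-\delta)}$$
by the preceding lemma, which is the asserted estimate (the set written $T_n$ in the statement being $T_n^v$).

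I do not expect a genuine obstacle here: all the substance of the section --- the Fourier-analytic input isolating the entropy gap $\delta$ in Lemmas \ref{fourier} and \ref{ent}, and the method-of-types count bounding $|S_n|$ --- is already in place, and what remains is bookkeeping. The single point that needs a line of care is the claim that $r$ has uniformly bounded fibres, which is exactly the role of Lemma \ref{lem.equiv}; without confining the first letter to a fixed range, the fibres of $r$ could grow with $n$ and the transfer of the count would break down.
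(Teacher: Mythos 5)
Your proposal is correct and follows essentially the same route as the paper: reduce the first letter modulo $N$ using the bounded range supplied by Lemma \ref{lem.equiv}, observe that the empirical measure commutes with this reduction so that the image of $T_n^v$ lands in $S_n$, and transfer the count via the uniformly bounded fibres. Your write-up is in fact slightly more careful than the paper's (explicit fibre bound $\lceil(L+1)/N\rceil$ and the identity $ML_n^\om=L_n^{r(\om)}$), but the argument is the same.
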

\begin{proof}[Proof of Theorem \ref{main}]
Let $n \in \mathbb{N}.$ Partition the level $n$ basic sets of $K$ into sets $\{D^n_v\}_{v \in \V}$ as follows: Take a level $n$ basic set of $K;$ this is associated uniquely to some $\om \in \Gamma^{n}.$ Let $v \in \V$ be such that $\E(\Pi_{v}L_n^\om) \leq 1 - \delta.$ Place this basic set in $D^n_v.$ We can do this by Lemma \ref{ent}, and so $K \subset \bigcup_{v \in \mathcal{V}}D_v^n.$ We then have that $D_v^n \subset P_v^{-1}\pi_v(T_n).$ Then by Lemma \ref{count} $P_v^{-1}\pi_v(T_n)$ can be covered by $\sim N^{n(1-\delta)}$ $d-1$ dimensional hyperplanes of width $\sim N^{-n}.$ Now let $\epsilon >0.$ Choose $n \in \N$ large enough so that $
N^{-\delta} < \epsilon.$ The result then follows from Proposition \ref{hyper}.
\end{proof}
\section{Final remarks}\label{sect.final}
\subsection{Other tube-null sets}
We show that a class of sponge-like sets with a $N^{-1}$-adic grid structure are tube-null. We briefly recall the definition of a graph directed sets; a generalisation of iterated function systems. Let $\V$ be a set of $q$ vertices and let $\mathcal{E}$ be a collection of directed edges, so that $\mathcal{G} = (\V,\Ec)$ is a directed graph where for any two vertices there is a path of edges connecting them. For each $e \in \calE$ assign a contraction $f_e$ to it. Let $K_1,\dots ,K_q$ be the \textit{graph directed sets} associated to $\mathcal{G},$ that is, the unique non-empty compact sets $K_1,\dots ,K_q$ such that
\begin{equation}\label{eq.gds}
K_i = \bigcup_{j=1}^q\bigcup_{e \in \Ec_{i,j}}f_e(K_j).
\end{equation}
To see that this is a generalisation of an IFS, consider a set of contractions on $\R^d,$ say, $\calF.$ Let $\V = \{v\}$ be a single vertex and for each $f \in \F$ consider a directed edge $e,$ associated to $f,$ from $v$ to $v.$ For this graph we have
\begin{equation}
    K = \bigcup_{f \in \F} f(K) = \bigcup_{e \in \calE_{v,v}}f_e(K),
\end{equation}
and so \eqref{eq.gds} is satisfied. See \cite{fal} for a more in depth discussion on graph-directed sets. 
\begin{thm}\label{thm.general}
    Let $f:\R^d \rightarrow \R^d$ be a map of the form 
    $$f(x) = \frac{R(x)}{N} + \frac{i}{N},$$
    where $R$ is an isometry that maps the unit cube to itself, and $i \in \{0,1,\dots,N-1\}^{d}.$ Let $\F$ be a finite collection of maps of this form. Let $K_1,\dots,K_q$ be a collection of graph directed attractors derived from $\F.$ Then each $K_i$ is tube-null if and only if $\calL^d(K_i) = 0$ for each $i = 1,\dots,q.$
    \end{thm}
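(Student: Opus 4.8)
The plan is to reduce the statement to Theorem~\ref{main} by the same passage to a self-similar over-set that is used to derive its corollary. For the easy direction, suppose $\calL^d(K_i)>0$ for some $i$ and apply Proposition~\ref{prop.nottubenull} with $\mu=\calL^d|_{K_i}$: for any hyperplane $V$ the fibres of the orthogonal projection $P_V$ through $[0,1]^d$ are segments of length at most $\diam([0,1]^d)=\sqrt d$, so by Fubini $P_V\mu$ is absolutely continuous with density at most $\sqrt d$, and hence $K_i$ is not tube-null. Therefore if every $K_i$ is tube-null then $\calL^d(K_i)=0$ for every $i$, which is the forward implication.

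For the converse, assume $\calL^d(K_i)=0$ for all $i$, let $G$ be the finite group of isometries of the cube $[0,1]^d$, and set $\mathbf{K}=\bigcup_{g\in G}\bigcup_{i=1}^{q}g(K_i)$. This is compact, being a finite union of compact sets, and $\calL^d(\mathbf{K})=0$, being a finite union of isometric copies of null sets, so $\mathbf{K}\subsetneq[0,1]^d$; moreover $K_i\subseteq\mathbf{K}$ for each $i$. The point is that $\mathbf{K}$ is forward invariant under the $\times N$-map $T$. Indeed, a map of the admissible form $h(x)=\tfrac{S(x)}{N}+\tfrac{m}{N}$, with $S\in G$ and $m\in\{0,\dots,N-1\}^d$, takes $[0,1]^d$ onto the level-$1$ cube with index $m$, on which $T$ is the affine bijection $z\mapsto Nz-m$, so $T\circ h=S$ there; and for $g\in G$ and a generator $f_e(x)=\tfrac{R_e(x)}{N}+\tfrac{i_e}{N}$ of $\F$, the composition $g\circ f_e$ is again admissible — its image is an $N$-adic cube because $g$ permutes the $N$-adic subdivision, and its isometry part is $gR_e\in G$. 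Substituting these two facts into the graph-directed identities $K_i=\bigcup_{e:\,s(e)=i}f_e(K_{t(e)})$ gives $T(g(K_i))=\bigcup_{e:\,s(e)=i}(gR_e)(K_{t(e)})\subseteq\mathbf{K}$, hence $T(\mathbf{K})\subseteq\mathbf{K}$; as usual one ignores the $N$-adic boundary, on which $T$ is undefined.

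Thus $\mathbf{K}$ is a closed, $\times N$-forward-invariant, proper subset of $[0,1]^d$, and one finishes as in the proof of the corollary to Theorem~\ref{main}: since $[0,1]^d\setminus\mathbf{K}$ is open and non-empty it contains the closure of some level-$q$ cube, so the corresponding word $w\in(\{0,\dots,N-1\}^d)^{q}$ is a prefix of no point of $\mathbf{K}$, and, by forward invariance, a sub-block of no point of $\mathbf{K}$. Writing $\Gamma'$ for the set of length-$q$ words that do occur, one has $\Gamma'\subsetneq(\{0,\dots,N-1\}^d)^{q}$, hence $|\Gamma'|<(N^q)^d$, and $\mathbf{K}\subseteq K'$ where $K'$ is the attractor of $\{x\mapsto\tfrac{x}{N^q}+\tfrac{w'}{N^q}\}_{w'\in\Gamma'}$; by Theorem~\ref{main} applied with $N^q$ in place of $N$, the set $K'$ is tube-null, so $\mathbf{K}$ and therefore each $K_i$ is tube-null.

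I do not expect a genuine obstacle here. The one idea that makes it work is that $T\circ f_e$ has linear part $R_e$, so although $T$ does not preserve the family $\{K_1,\dots,K_q\}$ it does preserve its $G$-orbit $\mathbf{K}$, which is then a bona fide $\times N$-invariant proper subset of the cube and hence covered by Theorem~\ref{main}. The only points needing a little care are the bookkeeping that $g\circ f_e$ stays admissible with isometry part in $G$, and the routine remark that the $N$-adic boundary, where $T$ is discontinuous, may be disregarded when selecting the forbidden word $w$ — for instance by first picking an interior point of $[0,1]^d\setminus\mathbf{K}$ together with a small cube around it.
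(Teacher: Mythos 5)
Your proposal is correct and follows essentially the same route as the paper: you form the same auxiliary set (the union of the images of the $K_i$ under the finite group of cube-fixing isometries), verify its invariance under the $\times N$-map via the same computation $T\circ(g\circ f_e)=g\circ R_e$, and then invoke the corollary of Theorem \ref{main} for proper closed invariant subsets of the cube. Your write-up is if anything slightly more careful than the paper's, since you only need (and only prove) forward invariance, you spell out the easy direction via Proposition \ref{prop.nottubenull}, and you flag the $N$-adic boundary issue explicitly.
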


\begin{proof}
Let $G$ be the group of all isometries of $\R^d$ that map $[0,1]^d$ to itself. Consider the set
$$L = \bigcup_{i=1}^q\bigcup_{A \in G} A(K_i).$$ 
Since $G$ and $q$ are finite it is clear that $\Le^d(L) = 0$ if and only if each $K_i$ is Lebesgue-null. We will show that this set is invariant under the map $T.$ 

\indent As usual, let $\calE$ be the set of (directed) edges of the graph, $\calE_{i,j}$ be the set of directed edges from vertex $i$ to vertex $j,$ $f_e$ be the contraction associated to $e,$ and $A_e$ be the cube-fixing isometry associated to the contraction $f_e.$  By the definition of a graph directed set, we have, 
\begin{align*}
\bigcup_{i=1}^q\bigcup_{A \in G} A(K_i) &= \bigcup_{i=1}^q\bigcup_{A \in G} A\bigg( \bigcup_{j=1}^q\bigcup_{e \in \Ec_{i,j}}f_e(K_j))\bigg) \\
&= \bigcup_{i=1}^q\bigcup_{A \in G} A\bigg( \bigcup_{j=1}^q\bigcup_{e \in \Ec_{i,j}}A_e(K_j)/N + j_e/N)\bigg) \\
&=\bigcup_{i=1}^q\bigcup_{A \in G}  \bigcup_{j=1}^q\bigcup_{e \in \Ec_{i,j}}A\circ A_e(K_j)/N + A(j_e))/N. \\
\end{align*}
Then applying the map $T$ we have,
$$T(L) = \bigcup_{i=1}^q\bigcup_{A \in G}  \bigcup_{j=1}^q\bigcup_{e \in \Ec_{i,j}}A\circ A_e(K_j) = L.  $$
$\subset$ is straightforward to see. To see $\supset$ let $1 \leq k \leq q$ and $B \in G.$ We show that $B(K_k) \subset T(L).$ Let $e \in \calE_{i,k}.$ We see immediately that
\begin{equation}
    \bigcup_{A \in G}\bigcup_{e \in \calE_{i,k}}A\circ A_e(K_k) \subset T(L).
\end{equation}
Let $e \in \calE_{i,k}$ and by the transitivity of the group $G$ we may find $A \in G$ so that $A\circ A_e = B.$ Therefore $B(K_k) \subset T(L)$ as required. Since $B$ and $k$ were arbitrary, it follows that $L \subset T(L).$
Thus since $L$ is $T$-invariant which is a proper subset of $[0,1]^d$ it must be tube-null. Then clearly $K_i \subset L$ for each $i = 1,...,q$ and thus each $K_i$ is tube-null.   
\end{proof} 
\indent We are also able to generalise the result of Harangi \cite{har}. Consider the lattice of points, $\Lambda,$ on the plane defined by the vertices of a regular triangular lattice, centred at $0,$ with side-length $1/N.$
Let $T$ be the equilateral triangle in the plane of side 1 with vertices $(0,0), (1,0), (1/2, \sqrt 3/2).$ Let $A_1,...,A_6: \R^2 \rightarrow \R^2$ be rotation of $0, 60, 120, 180, 240, 300$ degrees respectively about the centre of $T.$ Define the IFS
$$\F = \{A_{k_i}(x)/N + j_i\}_{i \in \Ga}$$
where $\Ga$ is a finite indexing set, $k_i \in \{1,\cdots,6\}$ and each $j_i$ is an element of $\Lambda.$ Let $K$ be its attractor. We have the following result.
\begin{thm}\label{har}
The attractor $K$ is tube-null if and only if $\Le^2(K) = 0.$
\begin{proof}
Suppose that $\Le^2(K) = 0.$ Consider the union
$$\bigcup_{i = 1}^6 A_iK $$
Let $F: \R^2 \rightarrow \R^2$ be the affine map defined by  
$$F(x,y) = (x + y/2, \sqrt 3 y / 2).$$
Let $G$ be its inverse. Now define the set
$$L = \bigcup_{i=1}^6 G(A_i(K)).$$ 
We claim this map is invariant under the map $T.$ Indeed,
\begin{align*}
T(L) &= T\bigg( \bigcup_{j=1}^6\bigcup_{i\in \Ga} G(A_{k_i+j}(K))/N + G(j_i)\bigg)\\
&= \bigcup_{j=1}^6\bigcup_{i\in \Ga} G(A_{k_i+j}(K))\\
&= L
\end{align*}
Therefore $L$ is tube-null and since $K$ is a subset, so is $K.$
\end{proof}
\end{thm}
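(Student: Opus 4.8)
The plan is to treat the two implications separately, with essentially all the work in the ``if'' direction. The ``only if'' direction is soft: since $K$ is compact it lies in some ball $B(0,R)$, and any tube of width $w$ meets $B(0,R)$ in a set of area at most $2Rw$. Hence if $\{T_i\}_{i\in\N}$ covers $K$ with $\sum_i w(T_i) < \epsilon$, then $\Le^2(K) \leq \sum_i \Le^2\big(T_i \cap B(0,R)\big) \leq 2R\epsilon$, and letting $\epsilon \to 0$ forces $\Le^2(K) = 0$. (Equivalently, if $\Le^2(K) > 0$ then $\Le^2|_K$ is a non-trivial measure assigning mass $\lesssim w$ to every tube of width $w$, so $K$ is not tube-null by Proposition~\ref{prop.nottubenull}.)

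For the ``if'' direction I would assume $\Le^2(K) = 0$ and linearly straighten the triangular grid of the construction into the standard $N^{-1}$-adic grid, then invoke the corollary to Theorem~\ref{tubenull.thm} that a closed, $\times N$-invariant, proper subset of the unit square is tube-null. Concretely, let $F(x,y) = (x + y/2, \sqrt3\, y/2)$ be the linear map carrying the standard basis to the generators of the triangular lattice, and let $G = F^{-1}$; then $G$ maps $\Lambda$ onto $\tfrac1N\Z^2$. Put
$$L = \bigcup_{j=1}^{6} G\big(A_j(K)\big).$$
Since the regular hexagon $\bigcup_{j} A_j(T)$ is (a translate of) a fundamental domain for $N\Lambda$, its image under $G$ is a fundamental domain for $\Z^2$, so $L$ may be regarded as a compact subset of $[0,1]^2$; and since $G$ is a linear isomorphism, $\Le^2(L) = |\det G|\,\Le^2\big(\bigcup_j A_j(K)\big) = 0$, whence $L \subsetneq [0,1]^2$.

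The heart of the proof is to verify $T(L) = L$, where $T(x) = Nx \bmod 1$. Substituting the IFS relation $K = \bigcup_{i\in\Ga}\big(A_{k_i}(K)/N + j_i\big)$ into each piece and using that $G$ and the $A_j$ are linear gives $G(A_j(K)) = \bigcup_{i\in\Ga} G\big(A_{k_i+j}(K)\big)/N + G(A_j j_i)$, where $k_i+j$ is read modulo $6$ since the six rotations close under composition. As $A_j$ maps $\Lambda$ to itself, $G(A_j j_i) \in \tfrac1N\Z^2$, so applying $T$ scales by $N$ and deletes the resulting integer translation: $T\big(G(A_j(K))\big) = \bigcup_{i\in\Ga} G\big(A_{k_i+j}(K)\big)$. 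For each fixed $i$, as $j$ ranges over $\{1,\dots,6\}$ so does $k_i+j$, whence $\bigcup_j T(G(A_j(K))) = \bigcup_i L = L$; the reverse inclusion $L \subseteq T(L)$ follows by picking, for each $m$, any $i$ and taking $j = m - k_i$. Thus $L$ is closed, $\times N$-invariant, and properly contained in $[0,1]^2$, so it is tube-null. Taking $A_j = \mathrm{id}$ shows $G(K) \subseteq L$, so $G(K)$ is tube-null; and an invertible affine map sends a tube of width $w$ into a tube of width at most a fixed multiple of $w$, so tube-nullity is preserved under $G^{-1}$ and $K$ itself is tube-null.

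The step I expect to be the genuine obstacle is the claim, used in the second and third paragraphs, that a single linear map $G$ simultaneously straightens the lattice $\Lambda$ (needed for the $\times N$ structure) and conjugates all six rotations $A_1,\dots,A_6$ to maps that are compatible with reduction modulo $\tfrac1N\Z^2$ --- equivalently, that the hexagonal symmetry of the construction descends to the torus $\R^2/\Z^2$. This is exactly where the triangular lattice and the specific $60^\circ$ rotations about the centre of $T$ are essential (a generic lattice or rotation angle would break it), and it is the one place where checking how the centre of $T$, the rotations, and $\Lambda$ fit together requires an honest computation rather than formal manipulation.
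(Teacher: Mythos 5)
Your proposal is correct and follows essentially the same route as the paper: straighten the triangular lattice by the linear map $G = F^{-1}$, form $L = \bigcup_{j=1}^{6} G(A_j(K))$, verify $T(L) = L$ using that the six rotations close under composition and preserve $\Lambda$, and invoke the corollary that a proper closed $\times N$-invariant subset of $[0,1]^2$ is tube-null. You are in fact slightly more careful than the paper on two points it elides --- the ``only if'' direction (via Proposition~\ref{prop.nottubenull}) and the observation that it is $G(K)$, not $K$, that sits inside $L$, so one must note that the invertible affine map $G^{-1}$ preserves tube-nullity up to constants.
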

\bibliographystyle{alpha}
\bibliography{references.bib}

\begin{thebibliography}{PSSW20}

\bibitem[Car66]{carleson}
Lennart Carleson.
\newblock On convergence and growth of partial sums of {F}ourier series.
\newblock {\em Acta Math.}, 116:135--157, 1966.

\bibitem[Car09]{car1}
Anthony Carbery.
\newblock Large sets with limited tube occupancy.
\newblock {\em J. Lond. Math. Soc. (2)}, 79(2):529--543, 2009.

\bibitem[Che16]{chentubes}
Changhao Chen.
\newblock Distribution of random {C}antor sets on tubes.
\newblock {\em Ark. Mat.}, 54(1):39--54, 2016.

\bibitem[CSV07]{car2}
Anthony Carbery, Fernando Soria, and Ana Vargas.
\newblock Localisation and weighted inequalities for spherical {F}ourier means.
\newblock {\em J. Anal. Math.}, 103:133--156, 2007.

\bibitem[CW08]{csowise}
Marianna Cs\"{o}rnyei and Laura Wisewell.
\newblock Tube-measurability.
\newblock {\em Real Anal. Exchange}, 33(1):243--247, 2008.

\bibitem[DZ10]{dem}
Amir Dembo and Ofer Zeitouni.
\newblock {\em Large deviations techniques and applications}, volume~38 of {\em Stochastic Modelling and Applied Probability}.
\newblock Springer-Verlag, Berlin, 2010.
\newblock Corrected reprint of the second (1998) edition.

\bibitem[EW11]{einsiedlerwardbook}
Manfred Einsiedler and Thomas Ward.
\newblock {\em Ergodic theory with a view towards number theory}, volume 259 of {\em Graduate Texts in Mathematics}.
\newblock Springer-Verlag London, Ltd., London, 2011.

\bibitem[Fal97]{fal}
Kenneth Falconer.
\newblock {\em Techniques in fractal geometry}.
\newblock John Wiley \& Sons, Ltd., Chichester, 1997.

\bibitem[Har11]{har}
Viktor Harangi.
\newblock The {K}och snowflake curve is tube-null.
\newblock {\em Proc. Amer. Math. Soc.}, 139(4):1375--1381, 2011.

\bibitem[KT80]{kenig}
Carlos~E. Kenig and Peter~A. Tomas.
\newblock Maximal operators defined by {F}ourier multipliers.
\newblock {\em Studia Math.}, 68(1):79--83, 1980.

\bibitem[Mat15]{mat2}
Pertti Mattila.
\newblock {\em Fourier analysis and {H}ausdorff dimension}, volume 150 of {\em Cambridge Studies in Advanced Mathematics}.
\newblock Cambridge University Press, Cambridge, 2015.

\bibitem[Orp15]{orptube}
Tuomas Orponen.
\newblock On the tube occupancy of sets in {$\Bbb{R}^d$}.
\newblock {\em Int. Math. Res. Not. IMRN}, (19):9815--9831, 2015.

\bibitem[PSSW20]{pyo}
Aleksi Py\"or\"al\"a, Pablo Shmerkin, Ville Suomala, and Meng Wu.
\newblock Covering the {S}ierpi\'nski carpet with tubes.
\newblock {\em arXiv preprint arXiv:2006.00499}, 2020.

\bibitem[SS15]{shmsuotube}
Pablo Shmerkin and Ville Suomala.
\newblock Sets which are not tube null and intersection properties of random measures.
\newblock {\em J. Lond. Math. Soc. (2)}, 91(2):405--422, 2015.

\bibitem[SS18]{shmsuomart}
Pablo Shmerkin and Ville Suomala.
\newblock Spatially independent martingales, intersections, and applications.
\newblock {\em Mem. Amer. Math. Soc.}, 251(1195):v+102, 2018.

\end{thebibliography}

\end{document}